%% file: main.tex
\documentclass[preprint, authoryear, 11pt]{elsarticle}

\usepackage[
    letterpaper,
    top=1.0in,
    bottom=1.0in,
    left=1.0in,
    right=1.0in
]{geometry}

\usepackage{setspace}
\usepackage{hyperref}
\usepackage{amsmath,amssymb,amsfonts}
\usepackage{algorithmic}
\usepackage{graphicx}
\usepackage{textcomp}
\usepackage{xcolor}
\usepackage{amsthm}
\usepackage{bm}
\usepackage{dsfont}
\usepackage{subfigure}
\usepackage{subcaption}
\usepackage[algo2e,ruled,noend]{algorithm2e}
\usepackage{mathtools} 
\usepackage{pgfplots}
\usepackage{natbib}

\usepackage{multirow}

\newcommand{\I}{\mathcal I}
\newcommand{\J}{\mathcal J}
\newcommand{\T}{\mathcal T}

\newcommand{\R}{\mathbb R}

\newcommand{\norm}[1]{\left\lVert#1\right\rVert}

\newtheorem{theorem}{Theorem}

\newtheorem{lemma}{Lemma}

\newtheorem{corollary}{Corollary}

\theoremstyle{definition}
\newtheorem{definition}{Definition}

\journal{European Journal of Operational Research}
\begin{document}

\begin{frontmatter}
\title{Pricing Electric Vehicle Charging and Station Access via Copositive Duality}

\author[label1]{Nanfei Jiang\corref{cor1}}
\ead{nanfei@ucsb.edu}

\author[label1]{Yi Zhou}
\ead{yi_zhou@ucsb.edu}

\author[label2]{Josh A. Taylor}
\ead{jat94@njit.edu}

\author[label1]{Mahnoosh Alizadeh}
\ead{mahnoosh@ucsb.edu}

\cortext[cor1]{Corresponding author.}

\affiliation[label1]{
    organization={Department of Electrical and Computer Engineering, 
                  University of California, Santa Barbara},
    city={Santa Barbara},
    state={CA},
    postcode={93106},
    country={USA}
}

\affiliation[label2]{
    organization={Department of Electrical and Computer Engineering,
                  New Jersey Institute of Technology},
    city={Newark},
    state={NJ},
    postcode={07102},
    country={USA}
}

\input{sections/abstract}

\end{frontmatter}

\input{sections/intro}

\input{sections/prob_form}
\input{sections/pricing_mech}

\input{sections/numerical_sim}

\input{sections/conclusion}

\section*{Declaration of Generative AI and AI-assisted technologies in the writing process}

During the preparation of this work, the authors used ChatGPT to improve language and readability. After using this tool, the authors reviewed and edited the content as needed and take full responsibility for the content of the publication.

\bibliographystyle{elsarticle-harv}
\bibliography{references}

\input{sections/appendix}

\end{document}

%% file: sections/abstract.tex
\begin{abstract}
Optimized charging of electric vehicles (EVs) at public locations consists of two decisions: how much energy to deliver at what times, which is continuous, and where to plug in, which is binary. This makes optimizing EV charging a mixed-integer linear program (MILP). This discreteness undermines traditional marginal pricing methods. In this paper, we develop the first marginal-price-based mechanism for pricing EV charging with binary station access constraints. Using the result of \citet{burerCopositiveRepresentationBinary2009}, we express the EV charging as a completely positive program (CPP), whose dual is a copositive program (COP). This convex dual admits valid shadow prices even though the original allocation problem is discrete and nonconvex. By interpreting the COP dual variables as marginal prices, we construct a pricing mechanism that captures EV supply equipment (EVSE) congestion as well as charging-capacity limits. We prove that the resulting mechanism is revenue-adequate for the operator and individually rational for every EV user, in the strong sense that each user maximizes their own welfare by accepting their assigned charging plan rather than deviating to any alternative option. We further develop problem-specific inner-approximation and dimension-reduction techniques that substantially improve the computational tractability of solving the COP in our setting. Numerical experiments on both small and large scale charging instances demonstrate that our pricing mechanism captures discrete congestion effects and aligns user incentives with the system-optimal assignment, outperforming time-of-use (TOU) and convex relaxation benchmarks. 
\end{abstract}

\begin{keyword}
electric vehicle charging \sep copositive programming \sep pricing mechanism
\end{keyword}


%% file: sections/intro.tex
\section{Introduction}
Public EV charging stations are constrained in the amount of power they can provide and the number of vehicles that can charge at once. The latter requires integer constraints because the number of vehicles at a given charger must be integer-valued. In resource allocation problems with convex costs and constraints, one can use the dual variables to design marginal pricing mechanisms with good properties, e.g., rationality and budget balance. However, the need for integer decision variables in EV charging and station access problems undermines standard marginal pricing approaches.

In this paper, we design a marginal pricing mechanism that remains valid in the presence of binary station-access decisions. We use the result of \citet{burerCopositiveRepresentationBinary2009}, which showed that any mixed--binary quadratic program (MBQP) can be equivalently reformulated as a completely positive program (CPP) whose dual is a copositive program (COP). Although solving both CPP and COP is NP-hard, they are convex, and therefore have strong duality if certain regularity conditions are satisfied. We clarify that this reformulation is not intended to make EV charging a computationally easier problem. Rather, it enables us to use convex duality to design pricing mechanisms for discrete station access decisions.

Specifically, we first formulate the EV charging problem as a MILP problem. Then, we construct a pricing mechanism using the variables in the dual COP. The resulting pricing mechanism explicitly captures congestion arising from EVSE scarcity and the facility’s charging-capacity limits. We further prove that the proposed pricing mechanism is (i) revenue-adequate for the system operator, in the sense that the operator’s total net profit is always non-negative, and (ii) individually rational for every EV user, in the sense that under the posted prices, each user maximizes their own welfare by selecting exactly the same EVSE spot given by the system-optimal assignment. We then present strategies for efficiently solving the resulting COP problem, including a variable-reduction technique and an SDP relaxation technique, which enable our method to handle much larger problem instances in practice. 
In the numerical experiments, we compare our pricing mechanisms with standard TOU pricing as well as convex relaxation–based pricing. The results show that, in our experiments, the proposed mechanism outperforms both benchmarks by achieving exact alignment between individual incentives and the system-optimal assignment, while remaining revenue-adequate for the operator.

\noindent\textbf{Related Work:} We review two relevant literature streams: (i) pricing mechanisms for EV charging station access, and (ii) COP and pricing.

\textbf{Pricing mechanisms:} The problem of pricing access to EV charging infrastructure has received growing attention in recent years, with various mechanisms proposed to align user incentives, manage congestion, and minimize grid peaks \citep{valogianni2020sustainable, zaidi2024minimizing, chen2024exponential}. One simple and widely deployed approach is TOU or flat energy pricing, in which users pay a pre-specified price that varies through time. TOU pricing has long been used in electricity tariff design and has also been adopted for EV charging to encourage off-peak charging. Recent representative works include \citep{vuelvas2021time, jones2022impact}. However, TOU pricing does not directly account for the scarcity of individual charging ports or local hosting constraints. As a result, it cannot fully reflect congestion caused by limited EVSE capacity or systematically recover other non-energy costs such as station construction and maintenance.

Motivated by these shortcomings, several pricing mechanisms have been developed that explicitly model congestion and finite EVSE capacity. One line of work constructs prices from the duals of convex relaxations of the underlying MILP charging problem, e.g., \citep{cui2021optimal, tucker2019online}. Related studies within this framework further examine pricing and coordination in coupled power–transportation networks, capturing system-level interactions among charging infrastructures \citep{wang2018coordinated, li2022strategic, alizadeh2016optimal}. These prices are easy to compute, but correspond to non-integral solutions, which can only be interpreted for aggregations of EVs. A second line of work adopts auction-based mechanisms for allocating charging resources \citep{vohra2011mechanism}. However, contrary to marginal pricing, VCG-type pricing generally cannot be posted in advance for users to make their own decisions and requires one counterfactual optimization per user to compute payments.
A third line of work uses queueing theory, where delays arise from stochastic arrivals and finite service capacity, e.g., \citep{moghaddam2019coordinated, zhang2018optimal, lai2022pricing}. While analytically elegant in classical queueing settings, these models do not retain their tractability once practical smart charging algorithms that are optimized based on variable grid costs are employed.



Here we consider the offline setting. However, practical implementations must be online, where EVs arrive sequentially and decisions must be made without knowledge of future demand. For example, convex relaxation-based methods can be adapted to online operation via a real-time primal-dual update, e.g., \citep{buchbinder2009design, chen2022primal}. Auction-based methods can also be made online by clearing the market at pre-defined intervals as users arrive, e.g., \citep{rigas2022mechanism, gerdingOnlineMechanismDesign2011, robu2013online}. Given the practical relevance, we intend to develop online extensions of our pricing mechanisms in future work.

\textbf{Copositive programming and pricing:} \citet{burerCopositiveRepresentationBinary2009} established that, under mild regularity conditions, any MBQP can be reformulated equivalently as a CPP. Since the dual of a CPP is a COP, this result builds a bridge between discrete optimization problems and the COP framework. Leveraging this connection, a rich set of theoretical and algorithmic tools developed for CPP/COP can be brought to bear on the analysis of the original MILP problem, including hierarchical approximations and structural results on copositive cones \citep{parrilo2000structured, bomze2000copositive, dur2010copositive}. 

The approach we adopt builds on the framework introduced in \citet{guoCopositiveDualityDiscrete2025} for electricity market applications. In that work, the MILP for unit commitment was expressed as an equivalent CPP, the dual of which was used to design pricing mechanisms for the on/off decisions of generators.

The rest of this paper is organized as follows. 
Section~\ref{sec: prob_form} formulates the EV charging problem as a MILP problem and constructs its CPP representation and COP dual. Section~\ref{sec: mechanism} uses the COP to formulate a pricing mechanism. Section~\ref{sec: main result} establishes its key theoretical properties, including revenue adequacy and individual rationality. Section~\ref{sec: approx} introduces two methods for improving the computational tractability of the COP in our setting. Section~\ref{sec: numerical} presents numerical simulations to evaluate the proposed pricing mechanisms, including comparisons with TOU pricing and convex relaxation-based pricing.

\section*{Notation}
\begin{align}
    & \mathcal{I} && \text{Set of EVs, indexed by }i, \nonumber \ 
    |\mathcal{I}|=I \\
    & \mathcal{J} && \text{Set of EVSEs, indexed by }j, \ \ 
    |\mathcal{J}|=J \nonumber \\
    & \mathcal{T} && \text{Set of time intervals, indexed by } t, \ \ 
    |\mathcal{T}|=T \nonumber \\
    & v_i  && \text{User $i$'s valuation for receiving charging service} \nonumber \\
    & e_i && \text{EV $i$'s total energy request} \nonumber \\
    & t_i^-,t_i^+ && \text{EV $i$'s arrival and departure time} \nonumber \\
    & s_i(t) && \text{Binary constant for EV $i$ parking at time $t$} \nonumber \\
    & p_j(t) && \text{Energy allocated at EVSE $j$ at time $t$} \nonumber \\
    & \pi(t) && \text{Unit energy cost (grid price) at time $t$} \nonumber \\
    & G_j(t) && \text{EVSE $j$'s max energy output at time $t$} \nonumber\\
    & x_{ij} && \text{Binary assignment for EV $i$ at EVSE $j$} \nonumber \\
    & \phi_{ij}\ge 0 && \text{Energy-delivery slack for EV $i$ and EVSE $j$} \nonumber \\
    & \psi_{jt}\ge 0 && \text{Station-occupancy slack for EVSE $j$ at time $t$} \nonumber \\
    & \zeta_{jt}\ge 0 && \text{Headroom-to-capacity slack for EVSE $j$ at time $t$} \nonumber \\
    & \xi_i\ge 0 && \text{Unassigned-vehicle slack for EV $i$} \nonumber 
\end{align}

Unless otherwise specified, $\norm{\cdot}$ refers to the $L_2$ norm. The notation $[[1,n]]$ represents the set of integers from $1$ to $n$. We use $\bm{1}_n$ to denote an all-one vector with length of $n$. We denote the stacked vector form of variables using bold symbols; for example, the assignment variables $x_{ij}$ are stacked into $\bm{x}^\top = [x_{11}, x_{12}, \dots, x_{ij}, \dots, x_{IJ}]^\top$. We use $\bm{x}_i$ to stand for the sub-vector of $\bm{x}$ that corresponds to user $i$, i.e. $\bm{x}_i^\top = [x_{i1}, \dots, x_{ij}, \dots, x_{iJ}]^\top$. We also use $\bm{x}_{-i}$ to denote the assignments of all other users except user $i$.

%% file: sections/prob_form.tex
\section{Problem Formulation} \label{sec: prob_form}


We consider a charging station with $I$ EVs arriving to use one of $J$ charging spots equipped with EVSEs, indexed by $j \in \J$. Time is discretized into $T$ periods, indexed by $t \in \T$. Each EV user $i$ is characterized by a tuple $(v_i, t_i^{-}, t_i^{+}, e_i)$, where $t_i^{-}$ and $t_i^{+}$ denote the arrival and departure times, $e_i$ is the total energy requirement, and $v_i$ represents the valuation for user $i$ receiving charging service. If EV $i$ is assigned to an EVSE and successfully obtains $e_i$ units of energy, it receives value $v_i$; otherwise, if it is not assigned to any charger, it departs unserved and obtains zero value. For brevity of notation, we assume each EVSE can only serve one EV at a time.

We consider an \emph{offline} setting in which all EV attributes $(v_i, t_i^{-}, t_i^{+}, e_i)$ are known in advance. The system operator solves the following welfare-maximization problem before implementing any charging decisions:
\begin{subequations}\label{eq: MILP}
\begin{align}
\max_{\{x_{ij}\},\,\{p_j(t)\}} & \; 
\sum_{i\in\I}\sum_{j\in\J} v_i x_{ij}
 - \sum_{t\in\T}\pi(t)\sum_{j\in\J}p_j(t) \label{eq: MILP-obj} \\
\text{s.t. } &
\sum_{t\in\T} s_i(t)p_j(t) \ge e_i x_{ij},
\quad \forall i\in\I,\, j\in\J  \label{eq: MILP-energy}\\
& \sum_{i\in\I} s_i(t)x_{ij} \le 1,
\quad \forall j\in\J,\, t\in\T  \label{eq: MILP-occupancy}\\
& 0 \le p_j(t) \le G_j(t),
\quad \forall j\in\J,\, t\in\T  \label{eq: MILP-capacity}\\
& \sum_{j\in\J} x_{ij} \le 1,
\quad \forall i\in\I  \label{eq: MILP-unique}\\
& x_{ij}\in\{0,1\},
\quad \forall i\in\I, j\in \J. \label{eq: MILP-binary}
\end{align}
\end{subequations}

The objective \eqref{eq: MILP-obj} is the sum of the valuations $v_i$ of all EVs that receive service minus the total electricity cost $\pi(t)$. 
Constraint \eqref{eq: MILP-energy} ensures that any EV assigned to a station receives at least its required energy $e_i$ within its availability window. Constraint \eqref{eq: MILP-occupancy} enforces that at most one EV may occupy a given station at any time slot. 
Constraint \eqref{eq: MILP-capacity} enforces the per-time-slot charging power limit $G_j(t)$ at each EVSE. Constraint \eqref{eq: MILP-unique} ensures that each EV is assigned to at most one charging station, and constraint \eqref{eq: MILP-binary} requires the assignment variables $x_{ij}$ to take binary values.

\subsection{Standard MILP Formulation}

To write the MILP as a CPP, we first convert \eqref{eq: MILP} to its standard form. Let $\phi_{ij}$, $\psi_{jt}$, $\zeta_{jt}$, and $\xi_i$ be slack variables corresponding to the inequality constraints \eqref{eq: MILP-energy}-\eqref{eq: MILP-unique}. To streamline notation, let $\bm{z}^\top = \big[\, \bm{x}^\top,\, \bm{p}^\top,\, \bm{\phi}^\top,\, \bm{\psi}^\top,\, \bm{\zeta}^\top,\, \bm{\xi}^\top \,\big] \in \R^N$. The MILP \eqref{eq: MILP} can be written as:

\begin{subequations}\label{eq: std-MILP}
\begin{align}
\text{(MILP)}&\quad \max_{\bm{z}} \quad 
 \sum_{i \in \I}\sum_{j \in \J} v_i x_{ij}
  - \sum_{t \in \T} \pi(t) \!\!\sum_{j \in \J} p_j(t) \label{eq: std-MILP-obj}\\
\text{s.t.} \quad
& \sum_{t \in \T} s_i(t)\,p_j(t) - e_i x_{ij} = \phi_{ij},
\quad \forall i \in \I, j \in \J \label{eq: std-MILP-energy}\\
& 1 - \sum_{i \in \I} s_i(t)\,x_{ij} = \psi_{jt},
\quad \forall j \in \J, t \in \T \label{eq: std-MILP-occupancy}\\
& G_j(t) - p_j(t) = \zeta_{jt},
\quad \forall j \in \J, t \in \T \label{eq: std-MILP-capacity}\\
& 1 - \sum_{j \in \J} x_{ij} = \xi_i,
\quad \forall i \in \I \label{eq: std-MILP-unique}\\
& x_{ij} \in \{0,1\},
\quad \forall i \in \I, j \in \J \label{eq: std-MILP-binary}\\
& \bm{z} \ge \bm{0}. \label{eq: std-MILP-nonneg}
\end{align}
\end{subequations}

\subsection{CPP representation}
\citet{burerCopositiveRepresentationBinary2009} established that the MILP \eqref{eq: std-MILP} can be reformulated as a CPP. The cone of completely positive matrices is:
$$
\mathcal{C}_n^*
:= \left\{X: \ X=\sum_m \bm{w}_m \bm{w}_m^\top,\ \bm{w}_m \in \mathbb{R}_+^n, \ m\ge1 \right\}.
$$

To represent  \eqref{eq: std-MILP} as a CPP, we introduce the variable $Z\in \mathcal{C}_N^*$. Each element of $Z$ corresponds to the product of two elements of $z$. For arbitrary variable collection $\bm{u}$, Let $Z(\bm{u})$ denote the principal submatrix of $Z$.  
For example, when $\bm{u} = (x_{ij}, p_j(t))$, we have
\[
Z(x_{ij}, p_j(t)) 
=
\begin{bmatrix}
Z_{x_{ij},x_{ij}} & Z_{x_{ij},p_j(t)} \\
Z_{p_j(t),x_{ij}} & Z_{p_j(t),p_j(t)} 
\end{bmatrix}.
\]


To encode the linear constraints in \eqref{eq: std-MILP} as trace constraints, we define vectors
$h_{ij}^1$, $h_{jt}^2$, $h_{jt}^3$, and $h_i^4$ that collect the coefficients of the primal variables appearing in the corresponding linear constraints of \eqref{eq: std-MILP}:

$$
(h_{ij}^1)^\top = [e_i,\; -s_i(1), \dots,-s_i(T),\; 1]^\top, \quad \forall i,j
$$
$$
(h_{jt}^2)^\top = [s_1(t),\,\dots,\,s_I(t),\; 1]^\top, \quad \forall j,t
$$
$$
(h_{jt}^3)^\top = \bigl(1,\; 1\bigr), \; \forall j,t, \quad \text{and} \quad
(h_i^4)^\top = \bigl(\mathbf{1}_J^\top,\; 1\bigr), \; \forall i.
$$

Then, consider the CPP:
\begin{subequations} \label{eq: CPP}
\begin{align}
\text{(CPP)} \quad 
&\max_{\bm{z},Z}\; 
\sum_{i \in \I}\sum_{j \in \J} v_i x_{ij}
   - \sum_{t \in \T} \pi(t) \sum_{j \in \J} p_j(t) 
\tag{3} \\
\text{s.t.}\quad
& \sum_{t \in \T} s_i(t)\,p_j(t) - e_i x_{ij} = \phi_{ij},\quad \forall i,j
\tag{$\lambda_{ij}^1$}\label{eq: CPP-energy} \\
& 1 - \sum_{i \in \I} s_i(t)\,x_{ij} = \psi_{jt},\quad \forall j,t
\tag{$\lambda_{jt}^2$}\label{eq: CPP-occupancy} \\
& G_j(t) - p_j(t) = \zeta_{jt},\quad \forall j,t
\tag{$\lambda_{jt}^3$}\label{eq: CPP-capacity} \\
& 1 - \sum_{j \in \J} x_{ij} = \xi_i,\quad \forall i
\tag{$\lambda_{i}^4$}\label{eq: CPP-unique} \\
& \mathrm{Tr}\!\left(h_{ij}^1 (h_{ij}^1)^\top
   Z(x_{ij},p_j(t),\phi_{ij})\right) = 0,\quad \forall i,j
\tag{$\Lambda_{ij}^1$}\label{eq: CPP-trace1} \\
& \mathrm{Tr}\!\left(h_{jt}^2 (h_{jt}^2)^\top
   Z(\{x_{ij}\}_{i\in I},\psi_{jt})\right) = 1,\quad \forall j,t
\tag{$\Lambda_{jt}^2$}\label{eq: CPP-trace2} \\
& \mathrm{Tr}\!\left(h_{jt}^3 (h_{jt}^3)^\top
   Z(p_j(t),\zeta_{jt})\right) = G_j(t)^2,\quad \forall j,t
\tag{$\Lambda_{jt}^3$}\label{eq: CPP-trace3} \\
& \mathrm{Tr}\!\left(h_i^4 (h_i^4)^\top
   Z(\{x_{ij}\}_{j \in \J},\xi_i)\right) = 1,\quad \forall i
\tag{$\Lambda_{i}^4$}\label{eq: CPP-trace4} \\
& x_{ij} = Z_{x_{ij},x_{ij}},\quad \forall i,j
\tag{$\delta_{ij}$}\label{eq: CPP-consistency} \\
& 
\begin{bmatrix}
1 & \bm{z}^\top \\
\bm{z} & Z
\end{bmatrix} \in \mathcal{C}_{N+1}^*.
\tag{$\Omega$}\label{eq: CPP-cone}
\end{align}
\end{subequations}

CPP \eqref{eq: CPP} can be viewed as a convexification of MILP \eqref{eq: std-MILP}: any feasible solution $\bm{z}^*$ to \eqref{eq: std-MILP} induces a feasible solution $(\bm{z},\bm{z}^* (\bm{z}^*)^\top)$ to \eqref{eq: CPP}, and for any optimal solution $(\bm{z},Z)$ to \eqref{eq: CPP}, the vector $\bm{z}$ lies in the convex hull of feasible solutions to \eqref{eq: std-MILP}.

Since \eqref{eq: CPP} is a conic linear program over the CPP cone, its conic dual is constrained over the COP cone $C_n$, defined as
\[
\mathcal{C}_n
:= \left\{\Omega \in \mathbb{S}^n:\ \bm{x}^\top \Omega \bm{x} \ge 0,\ \forall\,\bm{x}\in \mathbb{R}_+^n \right\}.
\]

The exact dual problem of \eqref{eq: CPP} is as follows, where
$\bm{\lambda}:=(\{\lambda_{ij}^1\},\{\lambda_{jt}^2\},\{\lambda_{jt}^3\},\{\lambda_i^4\})$,
$\bm{\Lambda}:=(\{\Lambda_{ij}^1\},\{\Lambda_{jt}^2\},\{\Lambda_{jt}^3\},\{\Lambda_i^4\})$,
$\bm{\delta}:=\{\delta_{ij}\}$, and $\Omega$ correspond to the constraints in \eqref{eq: CPP} and are identified by the tags listed next to each constraint.

\begin{equation} \label{eq: COP}
\begin{aligned}
    \text{(COP)}\quad \min_{\bm{\lambda}, \bm{\Lambda}, \bm{\delta}, \Omega} & \sum_{j,t} (\lambda_{jt}^2 + \Lambda_{jt}^2 + G_j(t)\lambda_{jt}^3  + (G_j(t))^2\Lambda_{jt}^3) \\
    &+ \sum_{i} (\lambda_i^4 + \Lambda_i^4) \\
    \text{subject to}& \quad  (\bm{\lambda}, \bm{\Lambda}, \bm{\delta}, \Omega) \in \mathcal{F}^{\text{COP}}.
\end{aligned}
\end{equation}

$\mathcal{F}^{\mathrm{COP}}$, the feasible set, is explicitly provided in \ref{app: F_COP}. It consists of a collection of linear constraints in $(\bm{\lambda}, \bm{\Lambda}, \bm{\delta}, \Omega)$ together with the copositive constraint $\Omega \in \mathcal{C}_{N+1}$.


%% file: sections/pricing_mech.tex
\section{Our Pricing Mechanism} \label{sec: mechanism}
We use the COP dual to construct a pricing mechanism for \eqref{eq: std-MILP}, which we refer to as copositive marginal pricing (CMP)
\begin{definition}[CMP]
\label{def: pricing}
Let $(\bm{\lambda}^*,\bm{\Lambda}^*)$ be an optimal solution of the COP \eqref{eq: COP}, and let $\bm{z}^*$ be an optimal solution of \eqref{eq: std-MILP}. 
Under the CMP mechanism, the payment charged to each EV $i$, denoted by $\rho_i$, is defined as
\[
\rho_i(\bm{x}_i,\bm{x}_{-i}) 
:= \rho_i^{\mathrm{LS,E}} + \rho_i^{\mathrm{LS,I}} + \rho_i^{c}(\bm{x}_i,\bm{x}_{-i}),
\]
where
\begin{align}
\rho_i^{\mathrm{LS,E}}
&:= \sum_{j,t} \pi(t)\, p_j^*(t)\, s_i(t)\, x_{ij}, 
\label{eq: rho-LS-E}
\\[4pt]
\rho_i^{\mathrm{LS,I}}
&:= \sum_{j,t}\big(\lambda_{jt}^{3,*}+G_j(t)\Lambda_{jt}^{3,*}\big)\, p_j^*(t)\, s_i(t)\, x_{ij}, 
\label{eq: rho-LS-I}
\\[4pt]
\rho_i^{c}(\bm{x}_i,\bm{x}_{-i})
&:= \nonumber \\
\sum_{j,t}\Big(&\lambda_{jt}^{2,*}
+ \Lambda_{jt}^{2,*}\big(1 + 2\!\!\sum_{l\neq i} s_l(t)x_{lj} + 2\psi_{jt}^*\big)\Big) \,
s_i(t)\, x_{ij}.
\label{eq: rho-c}
\end{align}

The three components have the following interpretations:
\begin{itemize}
    \item $\rho_i^{\mathrm{LS,E}}$: a payment for energy consumption.
    \item $\rho_i^{\mathrm{LS,I}}$: a shadow-price payment for EVSE charging-capacity limits.
    \item $\rho_i^{c}$: a congestion charge associated with limited EVSE occupancy, which depends not only on the decision variable $\bm{x}_i$, but also on the assignment of other EV users.
\end{itemize}

At the optimal assignment, the charge term $\rho_i^{c}$ simplifies to
\[
\rho_i^{c}(\bm{x}_i^*,\bm{x}_{-i}^*) 
= \sum_{j,t} (\lambda_{jt}^{2,*} + \Lambda_{jt}^{2,*})\, s_i(t)\, x_{ij}^*.
\]

\end{definition}

\section{Main Results} \label{sec: main result}
\subsection{Revenue Adequacy}
\begin{lemma} \label{lemma: positiveness}
For any feasible solution $(\bm{\lambda}, \bm{\Lambda})$ to \eqref{eq: COP}, we have $\bm{\lambda} \ge 0$ and $\bm{\Lambda} \ge 0$. Consequently,
\[
\rho_i^{\mathrm{LS,I}} \ge 0, \quad \rho_i^{c} \ge 0, \quad \forall i.
\]
\end{lemma}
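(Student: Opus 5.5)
The plan is to read the sign information off the explicit description of $\mathcal{F}^{\mathrm{COP}}$ in \ref{app: F_COP}. The only tool needed is a basic fact about copositive matrices: if $\Omega\in\mathcal{C}_{N+1}$, then every principal submatrix of $\Omega$ is copositive. In particular, testing with the unit vector $\bm{e}_k\ge 0$ gives $\Omega_{kk}\ge 0$ for every $k$. Testing with $\bm{e}_0+\tau\bm{e}_k$ for $\tau\ge 0$ gives
\[
\Omega_{00}+2\tau\,\Omega_{0k}+\tau^2\Omega_{kk}\ge 0 .
\]
So if $\Omega_{00}=0$, letting $\tau\downarrow 0$ forces $\Omega_{0k}\ge 0$. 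The proof therefore reduces to locating each multiplier inside a diagonal entry or a first-row entry of $\Omega$.

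\textbf{Step 1: the $\bm{\Lambda}$'s.} Each slack variable $\phi_{ij},\psi_{jt},\zeta_{jt},\xi_i$ enters exactly one linear constraint and exactly one trace constraint. Its coefficient in the corresponding vector $h^k$ is $1$. It has zero coefficient in the primal objective and does not appear in the consistency constraints $(\delta_{ij})$. Hence the linear equality in $\mathcal{F}^{\mathrm{COP}}$ matching its diagonal position of $Z$ should read $\Omega_{\phi_{ij},\phi_{ij}}=\Lambda^1_{ij}$, $\Omega_{\psi_{jt},\psi_{jt}}=\Lambda^2_{jt}$, and so on, possibly up to a positive factor fixed by the sign convention of the dual. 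Diagonal nonnegativity of $\Omega$ then gives $\bm{\Lambda}\ge 0$.

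\textbf{Step 2: the $\bm{\lambda}$'s.} Again for each slack variable, the $(0,\text{slack})$ entry of $\Omega$ collects only the multiplier $\lambda$ of the unique linear constraint containing it, since the objective coefficient is zero. I expect this entry to be $\tfrac12\lambda^k$, up to the dual's sign convention. I then need $\Omega_{00}=0$. I expect this because the primal (CPP) fixes the top-left block entry to $1$, and the associated scalar dual variable is absorbed into the objective. So the linear constraint on $\Omega_{00}$ in $\mathcal{F}^{\mathrm{COP}}$ should force it to vanish, or at least to be such that the $2\times 2$ test above still yields $\Omega_{0,\text{slack}}\ge 0$. This is the step I expect to be the main obstacle. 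If $\Omega_{00}$ turns out to be a free or positive quantity, the $2\times2$ argument only gives $\lambda\ge -2\sqrt{\Omega_{00}\Lambda}$. In that case I would instead use a longer nonnegative test vector supported on the slack and on the variables of its own constraint, chosen to lie in the null direction of that constraint, so that the $\Lambda$ and $\Omega_{00}$ contributions cancel and only $\lambda$ survives.

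\textbf{Step 3: consequences for the payments.} Once $\bm{\lambda}\ge 0$ and $\bm{\Lambda}\ge 0$ hold, the two payment bounds follow by inspecting \eqref{eq: rho-LS-I} and \eqref{eq: rho-c}. Each summand is a product of nonnegative factors: $\lambda^{3,*}_{jt}+G_j(t)\Lambda^{3,*}_{jt}\ge 0$ because $G_j(t)\ge 0$. Moreover $p_j^*(t)\ge 0$ and $\psi^*_{jt}\ge 0$ by primal feasibility, $s_i(t),s_l(t)\in\{0,1\}$, and $x_{ij},x_{lj}\in\{0,1\}$. Hence the factor $1+2\sum_{l\ne i}s_l(t)x_{lj}+2\psi^*_{jt}\ge 1$. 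Therefore $\rho_i^{\mathrm{LS,I}}\ge 0$ and $\rho_i^c(\bm{x}_i,\bm{x}_{-i})\ge 0$ for every $i$ and every assignment.
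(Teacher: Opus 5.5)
Your proof is correct. It works with the same object as the paper: for each slack $s\in\{\psi_{jt},\zeta_{jt},\xi_i\}$, the explicit form of $\mathcal{F}^{\mathrm{COP}}$ makes the principal submatrix of $\Omega$ on $\{0,s\}$ equal to $\begin{bmatrix}0&\tfrac12\lambda\\ \tfrac12\lambda&\Lambda\end{bmatrix}$. You differ only in how you extract the signs. The paper invokes the Maxfield--Minc fact that copositive matrices of order at most four are PSD plus nonnegative, and then argues that the PSD part, having zero corner, has zero off-diagonal. Your test vectors $\bm{e}_k$ and $\bm{e}_0+\tau\bm{e}_k$ give $\Lambda\ge 0$ and $\lambda\ge-\tau\Lambda\to 0$ directly, which is shorter and needs no external theorem. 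Your Step 3 also makes explicit the factor-by-factor check that the paper compresses into ``consequently''.

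Two corrections to your commentary. First, $\Omega_{00}=0$ does not come from a corner multiplier being absorbed into the objective. CPP$'$ in the appendix has no constraint $Y_{00}=1$ at all, so no multiplier enters the corner, and $\tilde Q$, every $H$, every $\tilde H$ and every $B_{ij}$ have zero $(0,0)$ entry. This fact is essential, and no longer test vector could replace it. With a free corner multiplier $\mu$, shifting $(\mu,\lambda,\Lambda)$ by $(cb^2,-2bc,c)$ with $c>0$ adds the PSD matrix $c\,(-b,\hat h^\top)^\top(-b,\hat h^\top)$ to $\Omega$ and leaves $\mu+b\lambda+b^2\Lambda$ unchanged. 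So $\lambda$ could then be made arbitrarily negative.

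Second, for $\lambda^1_{ij}$ the sign is purely a matter of how the constraint is oriented, since its right-hand side is $0$. The appendix's $\hat h^1_{ij}$, which is what builds $\mathcal{F}^{\mathrm{COP}}$, has coefficient $-1$ on $\phi_{ij}$. Your computation therefore gives $\Omega_{0,\phi_{ij}}=-\tfrac12\lambda^1_{ij}$, hence $\lambda^1_{ij}\le 0$, not $\ge 0$. The paper's proof only covers $\lambda^2,\lambda^3,\lambda^4$ and the $\Lambda$'s, and that is all the payment bounds need.
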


Lemma~\ref{lemma: positiveness} follows from the structural property of the dual feasible set $\mathcal{F}^{\text{COP}}$. It implies that marginal prices are never negative, and therefore that the operator does not subsidize users due to congestion or charging-capacity constraints. 

The next result follows from the structure of the original MILP formulation: if an EVSE is not allocated to any EV in a particular time slot, no charging power should be supplied at that location and time.

\begin{lemma}
\label{lemma: p*}
Let $\bm{z}^*$ be an optimal solution of \eqref{eq: std-MILP}, then we have that
\begin{equation} p_j^*(t) \cdot \left(1 - \sum_{i \in \I} s_i(t)x_{ij}^*\right) = 0, \quad \forall j \in \J, t \in \T \label{eq: p* property}
\end{equation}
\end{lemma}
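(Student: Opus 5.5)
The plan is a direct exchange argument on the MILP \eqref{eq: std-MILP}: I will argue by contradiction that an optimal solution cannot supply energy at an EVSE in a slot where that EVSE is unoccupied.

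\textbf{Step 1: reduce to the unoccupied case.} Fix $(j,t)$. By \eqref{eq: MILP-occupancy} and \eqref{eq: MILP-binary}, the quantity $\sum_{i\in\I} s_i(t)x_{ij}^*$ is a nonnegative integer that is at most $1$, so it lies in $\{0,1\}$. If it equals $1$, then \eqref{eq: p* property} holds trivially for this $(j,t)$. It therefore suffices to show that $\sum_{i} s_i(t)x_{ij}^*=0$ implies $p_j^*(t)=0$. In that case every $i$ with $x_{ij}^*=1$ has $s_i(t)=0$.

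\textbf{Step 2: the perturbation.} Suppose $p_j^*(t)>0$. Define $\tilde{\bm z}$ by setting $\tilde p_j(t)=0$ and leaving all other $x$ and $p$ unchanged. The slacks are then recomputed: $\zeta_{jt}$ increases by $p_j^*(t)$, and each $\phi_{ij}$ decreases by $s_i(t)p_j^*(t)$.

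Next I check feasibility of the energy constraint \eqref{eq: MILP-energy} for every $i$:
\begin{itemize}
\item If $x_{ij}^*=1$, then $s_i(t)=0$ by Step 1. So $p_j(t)$ does not appear in the left-hand side, and the constraint is unchanged.
\item If $x_{ij}^*=0$, the constraint reads $\sum_{t'} s_i(t')p_j(t')\ge 0$. This still holds because all $p\ge 0$.
\end{itemize}
Constraints \eqref{eq: MILP-occupancy}, \eqref{eq: MILP-unique} and \eqref{eq: MILP-binary} do not involve $p$. Constraint \eqref{eq: MILP-capacity} holds since $0\le G_j(t)$. Hence $\tilde{\bm z}$ is feasible, with all slacks nonnegative.

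\textbf{Step 3: compare objectives.} The objective changes by $+\pi(t)p_j^*(t)$. If $\pi(t)>0$, this strictly improves the objective and contradicts the optimality of $\bm z^*$, which completes the proof.

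\textbf{The main obstacle} is the degenerate case $\pi(t)=0$ (or $\pi(t)<0$, where oversupply could even look attractive). In that case the exchange does not give a strict improvement. I expect the intended reading is that grid prices are positive, and I would state this assumption explicitly.

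Alternatively, one can treat $\pi(t)=0$ by choosing, among the optimal solutions, one with $p_j^*(t)=0$ on unoccupied slots. The perturbation above maps any optimal solution to another optimal solution with this property, so the lemma holds for a suitably selected $\bm z^*$. I would add a remark that Definition~\ref{def: pricing} is to be evaluated at such a selected optimum.
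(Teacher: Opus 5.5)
Your proposal is correct and follows essentially the paper's route. You use the same case split showing $\sum_{i} s_i(t)x_{ij}^*\in\{0,1\}$, then the same exchange argument that sets $p_j^*(t)$ to zero in the unoccupied case for a strictly better objective. Your explicit feasibility check of \eqref{eq: MILP-energy} and your caveat that strict improvement needs $\pi(t)>0$ are legitimate refinements: the paper uses $\pi(t)>0$ only implicitly in its ``one can verify'' step, and without it the lemma as stated can fail for some optimal $\bm{z}^*$.
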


\begin{theorem}[Revenue Adequacy] \label{thm: revenue-adequacy}
Let $\rho_i$ denote the payment charged to EV $i$ under CMP mechanism defined in Definition~\ref{def: pricing}. Then the total net profit of the EVSE operator is non-negative, i.e.,
\begin{equation}
    \sum_{i \in \I} \rho_i - \sum_{j \in \J,\, t \in \T} \pi(t)\,p^*_j(t) \;\ge\; 0.
\end{equation}
\end{theorem}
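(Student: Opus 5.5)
The plan is to split the total payment $\sum_i \rho_i$ into its three components, evaluated at the optimal assignment $\bm{x}^*$, and compare each with the operator's energy cost. The two capacity-related components are nonnegative by Lemma~\ref{lemma: positiveness}:
\[
\sum_i \rho_i^{\mathrm{LS,I}} \ge 0, \qquad \sum_i \rho_i^{c} \ge 0 .
\]
It therefore suffices to show that the energy component alone covers the electricity bill:
\[
\sum_{i} \rho_i^{\mathrm{LS,E}} \;\ge\; \sum_{j,t}\pi(t)\,p_j^*(t).
\]

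To get this, exchange the order of summation in \eqref{eq: rho-LS-E}:
\[
\sum_i \rho_i^{\mathrm{LS,E}} = \sum_{j,t} \pi(t)\,p_j^*(t) \sum_{i} s_i(t)\,x_{ij}^* .
\]
By Lemma~\ref{lemma: p*}, whenever $p_j^*(t) > 0$ we have $\sum_i s_i(t)\,x_{ij}^* = 1$. In all other cases $p_j^*(t) = 0$ and the term vanishes. Hence $p_j^*(t)\sum_i s_i(t)x_{ij}^* = p_j^*(t)$ for every $(j,t)$, and the energy revenue equals the energy cost exactly, with no inequality needed.

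Combining the two steps gives net profit
\[
\sum_i \big(\rho_i^{\mathrm{LS,I}} + \rho_i^{c}\big) \ge 0 .
\]

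The main obstacle is checking that the nonnegativity of $\rho_i^c$ from Lemma~\ref{lemma: positiveness} applies at the realized assignment. Here $\rho_i^c$ involves the factor $1 + 2\sum_{l\neq i} s_l(t)x_{lj} + 2\psi^*_{jt}$, which is nonnegative because $x$ and $\psi^*$ are nonnegative. Together with $\lambda^{2,*}, \Lambda^{2,*} \ge 0$, this makes the lemma's conclusion immediate. I would also confirm the sign convention: $\pi(t)$ may be negative, but this does not matter, since the energy terms cancel exactly rather than being bounded.
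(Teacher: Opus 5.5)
Your proposal is correct and follows the paper's proof: Lemma~\ref{lemma: positiveness} lets you drop $\rho_i^{\mathrm{LS,I}}$ and $\rho_i^{c}$, and after swapping the sums Lemma~\ref{lemma: p*} collapses $p_j^*(t)\sum_i s_i(t)x_{ij}^*$ to $p_j^*(t)$, so the energy component alone covers the electricity cost. One small correction to your closing remark on signs: the exact cancellation comes from Lemma~\ref{lemma: p*}, whose proof needs $\pi(t)>0$ (with $\pi(t)<0$ an optimal solution would draw power at idle slots, so the cancellation is no longer exact), although the idle-slot terms then contribute $-\pi(t)p_j^*(t)\ge 0$ to net profit, so the inequality still holds.
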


\begin{proof}
The proof of Theorem~\ref{thm: revenue-adequacy} follows directly from Lemma~\ref{lemma: positiveness} and Lemma~\ref{lemma: p*}. By Lemma~\ref{lemma: positiveness}, we have
$$
\rho_i = \rho_i^{\mathrm{LS,E}} + \rho_i^{\mathrm{LS,I}} + \rho_i^{c} \;\ge\; \rho_i^{\mathrm{LS,E}} = \sum_{j,t} \pi(t)\,p_j^*(t)\,s_i(t)\,x_{ij}^*,
\qquad \forall i \in \I.
$$
Summing both sides over all $i \in \I$ yields
$$
\begin{aligned}
\sum_{i \in \I} \rho_i
&\ge \sum_{i \in \I}\sum_{j \in \J,\,t \in \T} \pi(t)\,p_j^*(t)\,s_i(t)\,x_{ij}^* \\
&= \sum_{j \in \J,\,t \in \T} \pi(t)\,p_j^*(t)\left(\sum_{i \in \I} s_i(t)\,x_{ij}^*\right).
\end{aligned}
$$

Next, by Lemma~\ref{lemma: p*}, for every $j \in \J$ and $t \in \T$, we have
$$
p_j^*(t)\Big(\sum_{i \in \I} s_i(t)\,x_{ij}^*\Big) = p_j^*(t).
$$

Substituting this into the previous inequality gives
$$
\begin{aligned}
\sum_{i \in \I} \rho_i
&\ge \sum_{j \in \J,\,t \in \T} \pi(t)\,p_j^*(t)\left(\sum_{i \in \I} s_i(t)\,x_{ij}^*\right) \\
&= \sum_{j \in \J,\,t \in \T} \pi(t)\,p_j^*(t),
\end{aligned}
$$
which completes the proof.
\end{proof}
Theorem~\ref{thm: revenue-adequacy} does not assume strong duality or that $(\bm{\lambda}, \bm{\Lambda})$ be optimal. As long as $(\bm{\lambda}, \bm{\Lambda})$ is feasible for \eqref{eq: COP}, CMP is revenue-adequate. 


\subsection{Individual Rationality}
We show that, for each EV user $i$, once the reservation system assigns the charging decision $\bm{x}_i^*$, the user has no incentive to deviate to any other feasible assignment. In other words, the individual welfare achieved by user $i$ under the assigned solution is no worse than that under any alternative feasible assignment.

Let $\bm{z}^*$ denote an optimal solution to problem~\eqref{eq: std-MILP}. We aim to prove that the decision variables associated with user $i$, defined as $(\hat{\bm{z}}_i^*)^\top = [(\bm{x}_i^*)^\top, (\bm{\phi}_i^*)^\top,\, \xi_i^*]$, also constitute an optimal solution to the following individual subproblem:
\begin{subequations}\label{eq: MILP_i}
\begin{align}
(\text{Sub MILP})\quad & \max_{\hat{\bm{z}}_i}  \; 
\sum_{j\in\J} v_i x_{ij}
 - \rho_i(\bm{x}_i, \bm{x}^*_{-i}) \label{eq: MILP-obj_i} \\
\text{s.t. } &
\sum_{t\in\T} s_i(t)p^*_j(t) - e_i x_{ij} = \phi_{ij},
\quad \forall j\in\J  \label{eq: MILP-energy_i}\\
& 1- \sum_{j\in\J} x_{ij} = \xi_i \label{eq: MILP-unique_i}\\
& x_{ij}\in\{0,1\}, \quad \forall j\in \J \label{eq: MILP-binary_i} \\
& \hat{\bm{z}}_i \ge 0
\end{align}
\end{subequations}
where $\hat{\bm{z}}_i^\top := [\bm{x}_i^\top, \bm{\phi}_i^\top, \xi_i]$ is the associated decision variable for user $i$, and $\rho_i(\bm{x}_i,\bm{x}^*_{-i})$ defined in Definition~\ref{def: pricing} is our design of the payment for user $i$, given her choice $x_{ij}$.

We note that our notion of individual rationality is stronger than the looser requirement used in some prior EV-charging papers, e.g., \citep{rigas2022mechanism, tucker2019online}, where individual rationality only requires that each EV user obtain nonnegative welfare (i.e., the user would not reject the service offer). In contrast, our CMP mechanism not only guarantees nonnegative welfare, but ensures that the assigned service $\bm{x}^*_i$ is the welfare-maximizing choice for each user $i$ under the proposed pricing scheme. Our result is summarized as Theorem \ref{thm: individual rational} in the following. The proof is in the appendix.
\begin{theorem}[Individual Rationality]
\label{thm: individual rational}
If strong duality holds for \eqref{eq: CPP}, then the CMP mechanism in Definition~\ref{def: pricing} is individually rational in the following senses:
\begin{enumerate}
    \item Each EV user obtains nonnegative welfare from the assignment.
    \item No EV user has an incentive to choose a different assignment option.
\end{enumerate}
\end{theorem}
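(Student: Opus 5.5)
The plan follows the copositive-duality argument of \citet{guoCopositiveDualityDiscrete2025}: use the optimal COP multipliers to certify that the Sub MILP \eqref{eq: MILP_i} is solved by $\hat{\bm{z}}_i^*$. Fix $i$ and let $(\bm{\lambda}^*,\bm{\Lambda}^*,\bm{\delta}^*,\Omega^*)$ be optimal for \eqref{eq: COP}.

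\textbf{Step 1: complementary slackness.} Strong duality for \eqref{eq: CPP} makes the CPP value equal the MILP value. The rank-one point $(\bm{z}^*,\bm{z}^*(\bm{z}^*)^\top)$ is feasible and attains that value, so it is CPP-optimal. Hence the Frobenius product of $\Omega^*$ with $\begin{bmatrix}1 & (\bm{z}^*)^\top\\ \bm{z}^* & \bm{z}^*(\bm{z}^*)^\top\end{bmatrix}$ vanishes, i.e. $(1,\bm{z}^*)^\top\Omega^*(1,\bm{z}^*)=0$.

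\textbf{Step 2: a quadratic Lagrangian identity.} Since $\Omega^*$ is copositive, $(1,\bm{z})^\top\Omega^*(1,\bm{z})\ge 0$ for every $\bm{z}\ge 0$. Using the linear equalities in $\mathcal{F}^{\mathrm{COP}}$ (\ref{app: F_COP}), this quadratic form can be written as
\begin{equation*}
\mathcal{L}(\bm{z}) := \text{COP objective} - \text{MILP objective}(\bm{z}) + (\text{terms multiplying constraint residuals}),
\end{equation*}
where the residual terms are:
\begin{itemize}
\item $\lambda^2_{jt}$ times the residual of \eqref{eq: CPP-occupancy};
\item $\Lambda^2_{jt}$ times $\bigl(\sum_l s_l(t)x_{lj}+\psi_{jt}\bigr)^2-1$, and similarly for the trace constraints \eqref{eq: CPP-trace1}, \eqref{eq: CPP-trace3}, \eqref{eq: CPP-trace4};
\item $\delta_{ij}(x_{ij}^2-x_{ij})$.
\end{itemize}

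\textbf{Step 3: restrict to user $i$'s deviations.} Evaluate $\mathcal{L}$ at the point $\tilde{\bm{z}}$ that equals $\bm{z}^*$ except in user $i$'s block $(\bm{x}_i,\bm{\phi}_i,\xi_i)$, which is any feasible point of \eqref{eq: MILP_i}. The occupancy slack $\psi$ is kept at $\psi^*$, and $p$ is kept at $p^*$. In this evaluation:
\begin{itemize}
\item The binary, energy and uniqueness residuals vanish, so the $\delta$, $\lambda^1$, $\Lambda^1$, $\lambda^4$, $\Lambda^4$ contributions cancel.
\item The capacity terms are unchanged.
\item The occupancy terms do not vanish. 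Expanding $\bigl(s_i(t)x_{ij}+\sum_{l\ne i}s_l(t)x^*_{lj}+\psi^*_{jt}\bigr)^2$ and using $x_{ij}^2=x_{ij}$, the part linear in $x_{ij}$ has coefficient $\Lambda^{2}_{jt}\bigl(1+2\sum_{l\ne i}s_lx^*_{lj}+2\psi^*_{jt}\bigr)s_i(t)$. Together with $\lambda^2_{jt}s_i(t)$, this is exactly $\rho_i^c$.
\item The objective's energy term, combined with the capacity multipliers via Lemma~\ref{lemma: p*}, should produce $\rho_i^{\mathrm{LS,E}}+\rho_i^{\mathrm{LS,I}}$.
\end{itemize}
The outcome is the identity
\begin{equation*}
(1,\tilde{\bm{z}})^\top\Omega^*(1,\tilde{\bm{z}}) = C_{-i} - \bigl[\textstyle\sum_j v_ix_{ij}-\rho_i(\bm{x}_i,\bm{x}^*_{-i})\bigr],
\end{equation*}
with $C_{-i}$ independent of $\bm{x}_i$.

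\textbf{Step 4: conclude.} Copositivity gives a left-hand side $\ge 0$ for every deviation, while Step 1 gives $=0$ at $\bm{x}_i^*$. Hence user $i$'s welfare at $\bm{x}_i^*$ is at least that at any feasible $\bm{x}_i$, which is part 2. For part 1, compare with $\bm{x}_i=\bm{0}$: this is feasible for \eqref{eq: MILP_i}, and $\rho_i(\bm{0},\cdot)=0$ because every payment component carries a factor $x_{ij}$. So the welfare is $\ge 0$.

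\textbf{Main obstacle.} The hard part is Step 3: verifying from the explicit linear constraints of $\mathcal{F}^{\mathrm{COP}}$ that the cross terms of $\Omega^*$ between user $i$'s block and everything else reduce exactly to the posted prices. Two points need care: the occupancy cross terms must reproduce $\rho_i^c$ including the factor $2\psi^*_{jt}$, and the energy rows coupling $x_{ij}$ with $p_j(t)$ must reduce to $\rho_i^{\mathrm{LS,E}}+\rho_i^{\mathrm{LS,I}}$. For the latter, first try writing $\pi(t)p^*_j(t)$ and $(\lambda^3+G\Lambda^3)p^*_j(t)$ as the $(1,p)$-block contributions, then attribute them to user $i$ through $s_i(t)x_{ij}$ using Lemma~\ref{lemma: p*}, since $\sum_l s_l(t)x^*_{lj}=1$ whenever $p_j^*(t)>0$. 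Lemma~\ref{lemma: positiveness} is not needed here. If the decomposition leaves residual nonnegative terms, the argument still works provided they vanish at $\bm{x}_i^*$.
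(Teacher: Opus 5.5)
There is a genuine gap in Step~3, and it breaks part~1 of Step~4. With $(\bm p,\bm\zeta,\bm\psi)$ frozen at $(\bm p^*,\bm\zeta^*,\bm\psi^*)$, the energy cost and every capacity residual are constants in $\bm x_i$. So the computation actually yields
\[
(1,\tilde{\bm z})^\top\Omega^*(1,\tilde{\bm z}) = C-\Bigl[\textstyle\sum_j v_i x_{ij}-\rho_i^{c}(\bm x_i,\bm x_{-i}^*)\Bigr],
\]
with no $\bm x_i$-dependent copy of $\rho_i^{\mathrm{LS,E}}+\rho_i^{\mathrm{LS,I}}$. Your attribution via Lemma~\ref{lemma: p*}, $p_j^*(t)=p_j^*(t)\sum_l s_l(t)x_{lj}$, is an identity only at $\bm x^*$. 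After a deviation the sum drops to $0$ on $i$'s slots at $j^*$ and becomes $2$ on the powered slots of $j'$ in $i$'s window. Your fallback does not help either, since the leftover $\rho_i^{\mathrm{LS}}(\bm x_i)-\rho_i^{\mathrm{LS}}(\bm x_i^*)$ has no sign. Consequently, comparing with $\bm x_i=\bm 0$ only yields $\sum_j v_i x_{ij}^*\ge\rho_i^c(\bm x_i^*,\bm x_{-i}^*)$. That does not imply nonnegative welfare once $i$ draws power at a positive price. If you instead read $\rho_i^{\mathrm{LS,E}},\rho_i^{\mathrm{LS,I}}$ as lump sums fixed at $\bm x^*$, then Step~3 and part~2 go through. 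But then $\rho_i(\bm 0,\cdot)\neq 0$, and part~1 fails in the same way.

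The missing idea is the one the paper uses for claim~1: the test point must also remove the power delivered to user $i$. For $x_{ij^*}^*=1$, take:
\begin{itemize}
\item $\tilde{\bm x}_i=\bm 0$ and $\tilde\xi_i=1$;
\item $\tilde p_{j^*}(t)=0$ wherever $s_i(t)=1$;
\item $\tilde{\bm\phi}$ recomputed from the energy equalities, which stays nonnegative because other users assigned to $j^*$ have active slots disjoint from $i$'s;
\item $\bm\psi^*,\bm\zeta^*$ unchanged.
\end{itemize}
At each such $t$ the capacity residuals then contribute $-(\lambda^{3,*}_{j^*t}+G_{j^*}(t)\Lambda^{3,*}_{j^*t})p^*_{j^*}(t)-\Lambda^{3,*}_{j^*t}p^*_{j^*}(t)\zeta^*_{j^*t}$. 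This gives
\[
0\le(1,\tilde{\bm z})^\top\Omega^*(1,\tilde{\bm z})=v_i-\rho_i(\bm x_i^*,\bm x_{-i}^*)-\sum_t\Lambda^{3,*}_{j^*t}\,p^*_{j^*}(t)\,\zeta^*_{j^*t}\,s_i(t),
\]
so you do need Lemma~\ref{lemma: positiveness} ($\bm\Lambda^*\ge 0$), contrary to your remark.

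Under the choice-dependent reading, part~2 needs the same device. Move $i$ to a feasible $j'\neq j^*$ (call this choice $\bm x_i'$) while zeroing that same power. This gives $\rho_i^c(\bm x_i',\bm x_{-i}^*)\ge\rho_i(\bm x_i^*,\bm x_{-i}^*)$. Then $\pi\ge 0$ and Lemma~\ref{lemma: positiveness} let you drop the nonnegative terms $\rho_i^{\mathrm{LS,E}}+\rho_i^{\mathrm{LS,I}}$ evaluated at $\bm x_i'$. For $\bm x_i^*=\bm 0$, your frozen test point already gives $v_i\le\rho_i^c(\bm x_i',\bm x_{-i}^*)$.

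With these test points, your direct copositive certificate is a sound and somewhat cleaner replacement for the paper's Lagrangian/MBQP route, which additionally relies on the relaxed CPP being equivalent to the MBQP. The paper's own claim-2 step makes the same cost substitution (``after some manipulation''), which is harmless only under the lump-sum reading.
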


Theorem~\ref{thm: individual rational} establishes a strong form of individual rationality that is rarely attainable in convex relaxation-based or primal-dual methods. The reason is that these methods derive prices from fractional models that do not reflect the true discrete structure, and thus may assign prices that exceed user valuations or fail to support the optimal integral solution. Our CMP mechanism instead, ensures that every EV user benefits from accepting the operator’s assignment, so no user is financially penalized by participating in the mechanism. It further guarantees that, each user maximizes their welfare by choosing exactly the assignment prescribed by the system operator. In other words, the welfare-maximizing central assignment is also a Nash equilibrium of the induced user-level choice game. 

\section{Approximation Techniques}
\label{sec: approx}

The cutting plane algorithm is computationally practical when the size of $\Omega$ is less than about 100. For larger problems, approximate approaches are needed to solve  the COP more efficiently. In this section, we introduce two main strategies we adopt to reduce the computational burden of solving \eqref{eq: COP}.

Our first strategy is problem-specific. We observe that in \eqref{eq: MILP_i}, each EV user's only decision is the variable $\hat{\bm{z}}_i$, while the charging power profile $\bm{p}$ and the auxiliary variable $\bm{\zeta}$ are not directly chosen by individual users. Motivated by this observation, we fix $\bm{p}=\bm{p}^*$ and $\bm{\zeta}=\bm{\zeta}^*$ at their optimal values obtained from the original MILP, and rewrite~\eqref{eq: std-MILP} as the following reduced problem:
\begin{subequations}\label{eq: MILP-powerFixed}
\begin{align}
& \max_{\bm{x}, \bm{\phi}, \bm{\psi}, \bm{\xi}} \quad 
 \sum_{i \in \I}\sum_{j \in \J} v_i x_{ij}
  - \sum_{t \in \T} \pi(t) \!\!\sum_{j \in \J} p^*_j(t) \label{eq: MILP-obj-powerFixed}\\
\text{s.t.} \quad
& \sum_{t \in \T} s_i(t)\,p^*_j(t) - e_i x_{ij} = \phi_{ij},
\quad \forall i \in \I, j \in \J \label{eq: MILP-energy-powerFixed}\\
& 1 - \sum_{i \in \I} s_i(t)\,x_{ij} = \psi_{jt},
\quad \forall j \in \J, t \in \T \label{eq: MILP-capacity-powerFixed}\\
& 1 - \sum_{j \in \J} x_{ij} = \xi_i,
\quad \forall i \in \I \label{eq: MILP-unique-powerFixed}\\
& x_{ij} \in \{0,1\},
\quad \forall i \in \I, j \in \J \label{eq: MILP-binary-powerFixed}\\
& \bm{x}, \bm{\phi}, \bm{\psi}, \bm{\xi} \ge \bm{0}. \label{eq: MILP-nonneg-powerFixed}
\end{align}
\end{subequations}

By fixing $(\bm{p},\bm{\zeta})$ at $(\bm{p}^*,\bm{\zeta}^*)$, the resulting MILP involves fewer decision variables. Consequently, the dimension of the matrix $\Omega$ in the corresponding reduced COP problem becomes much smaller, which greatly reduces the computational burden. We refer to the pricing mechanism induced by solving the reduced COP as CMP-R (meaning CMP with problem reduction), with the following definition:

\begin{definition}[CMP-R]
Let $(\bm{\lambda}^*, \bm{\Lambda}^*)$ denote the dual solution obtained from the reduced COP problem associated with \eqref{eq: MILP-powerFixed}. Under the CMP-R mechanism, the payment for each EV $i$ is defined as
\[
\hat{\rho}_i(\bm{x}_i,\bm{x}_{-i}) 
= \rho_i^{\mathrm{LS,E}} + \hat{\rho}_i^{c}(\bm{x}_i,\bm{x}_{-i}), \quad \forall i \in \I,
\]
where $\hat{\rho}_i^{c}(\bm{x}_i,\bm{x}_{-i})$ has the same expression as $\rho_i^{c}(\bm{x}_i,\bm{x}_{-i})$ in \eqref{eq: rho-c}, but with $\lambda_{jt}^{2,*}$ and $\Lambda_{jt}^{2,*}$ replaced by their counterparts obtained from the reduced COP.
\end{definition}

One possible challenge is that CMP-R no longer retains the charging-capacity dual signals $\bm{\lambda}^3$ and $\bm{\Lambda}^3$. Nevertheless, this does not pose a practical concern: once the operator pre-specifies a feasible power profile $\bm{p}^*$, the charging-capacity constraints are automatically satisfied, and congestion arising from equipment power limits no longer needs to be priced explicitly. The dual solution obtained from the reduced COP problem can still induce a well-behaved pricing mechanism. 

\begin{theorem}\label{thm: reducedCOP}
The CMP-R mechanism is revenue-adequate. Moreover, if strong duality holds for the reduced COP problem, then CMP-R is also individually rational in the same sense as in Theorem~\ref{thm: individual rational}.
\end{theorem}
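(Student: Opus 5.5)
The plan is to show that the reduced problem \eqref{eq: MILP-powerFixed} is structurally a special case of \eqref{eq: std-MILP}, so that the arguments behind Theorems~\ref{thm: revenue-adequacy} and~\ref{thm: individual rational} carry over with the capacity block deleted.

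First, we write the CPP reformulation of \eqref{eq: MILP-powerFixed} following \citet{burerCopositiveRepresentationBinary2009}. Since $\bm{p}^*$ is now data, the energy constraint \eqref{eq: MILP-energy-powerFixed} becomes an affine equality in $(x_{ij},\phi_{ij})$ with constant $\sum_t s_i(t)p_j^*(t)$. We then form its COP dual. Its feasible set is the one in \ref{app: F_COP}, but with the rows and columns for $\bm{p},\bm{\zeta}$ and the multipliers $\lambda^3,\Lambda^3$ removed. The energy trace constraint now has right-hand side $(\sum_t s_i(t)p_j^*(t))^2$ and appears in the objective.

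\textbf{Revenue adequacy.} We repeat the argument of Lemma~\ref{lemma: positiveness} on the reduced feasible set to get $\lambda^2,\Lambda^2\ge0$, and hence $\hat\rho_i^c\ge0$. This step needs checking: Lemma~\ref{lemma: positiveness} relied on sign structure coming from the linear constraints tying the slack variables ($\psi_{jt}$ etc.) to the $\Omega$ entries, together with the nonnegative diagonal of a copositive matrix. Those slack rows are still present in the reduced problem, so the same derivation should apply verbatim to $\lambda^2_{jt},\Lambda^2_{jt}$.

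Lemma~\ref{lemma: p*} holds unchanged, since $\bm{z}^*$, which supplies $\bm{p}^*$ and $\bm{x}^*$, is optimal for \eqref{eq: std-MILP}. Moreover, $\bm{x}^*$ remains optimal for \eqref{eq: MILP-powerFixed}, because any feasible point there, paired with $\bm{p}^*$, is feasible for \eqref{eq: std-MILP} with the same energy cost. The chain of inequalities in the proof of Theorem~\ref{thm: revenue-adequacy}, with $\rho_i^{\mathrm{LS,I}}$ omitted, then gives $\sum_i\hat\rho_i\ge\sum_{j,t}\pi(t)p_j^*(t)$. As before, no optimality of the duals is needed here.

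\textbf{Individual rationality.} We mirror the appendix proof of Theorem~\ref{thm: individual rational}. Under strong duality, complementary slackness between the optimal reduced CPP solution $(\bm{z}^*,\bm{z}^*\bm{z}^{*\top})$ and the optimal COP dual gives $\langle \Omega^*, [1;\bm{z}^*][1;\bm{z}^*]^\top\rangle=0$. Copositivity gives $\langle \Omega^*, [1;\bm{z}][1;\bm{z}]^\top\rangle\ge0$ for every nonnegative $\bm{z}$. We apply this to the vector $\bm{z}$ obtained from $\bm{z}^*$ by replacing user $i$'s block $\hat{\bm{z}}_i$ with an arbitrary feasible alternative for (Sub MILP), and recomputing $\bm{\psi}$ from the occupancy constraint. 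Expanding $\Omega^*$ through the dual linear constraints, the difference of the two quadratic forms should equal the difference in user $i$'s welfare $\sum_j v_ix_{ij}-\hat\rho_i(\bm{x}_i,\bm{x}^*_{-i})$.

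The quadratic occupancy term generates the cross terms $2\Lambda^2_{jt}\big(\sum_{l\ne i}s_lx_{lj}+\psi_{jt}\big)$. This is exactly why $\hat\rho^c_i$ has the form \eqref{eq: rho-c}. The energy terms now involve only the constant $\sum_t s_i(t)p_j^*(t)$, so they contribute nothing that depends on the choice beyond $\rho^{\mathrm{LS,E}}_i$.

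\textbf{Main obstacle.} The difficulty is bookkeeping: we must verify that, once $\bm{p}$ is fixed, the dual terms formerly paired with $\bm{p}$ either vanish or become constants absorbed into $\rho_i^{\mathrm{LS,E}}$, so that no extra price term is needed. We also need the $\Lambda^1$ trace terms to cancel via $\phi_{ij}=\sum_t s_i(t)p_j^*(t)-e_ix_{ij}$. Nonnegative welfare then follows by comparing against the unassigned option $\bm{x}_i=0$, which is always feasible and yields zero payment.
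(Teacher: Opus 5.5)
Your revenue-adequacy part is the paper's argument: nonnegativity of the surviving $\lambda^2,\Lambda^2$ plus Lemma~\ref{lemma: p*}. For individual rationality, you replace the paper's Lagrangian-relaxation/MBQP argument with a direct comparison $\langle\Omega^*,[1;\hat{\bm z}][1;\hat{\bm z}]^\top\rangle\ge 0=\langle\Omega^*,[1;\bm z^*][1;\bm z^*]^\top\rangle$. That is a legitimate and somewhat cleaner route, since it only uses copositivity at nonnegative rank-one points.

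As written, however, your deviation point is wrong: you must not recompute $\bm{\psi}$ from the occupancy equation. If user $i$ moves into a slot already used by another EV, the recomputed $\psi_{jt}$ equals $-1$, so the lifted vector has a negative entry and copositivity gives nothing. Whenever the recomputed $\bm{\psi}$ stays nonnegative, every occupancy constraint holds and all $\lambda^2,\Lambda^2$ terms cancel, so no congestion price appears; this contradicts your own cross-term claim. You must hold $\bm{\psi}=\bm{\psi}^*$, as the paper does. Then the lifted point is nonnegative and only the occupancy constraints are violated. Expanding $\bigl(s_i(t)x_{ij}+\sum_{l\neq i}s_l(t)x^*_{lj}+\psi^*_{jt}\bigr)^2$ with $x_{ij}^2=x_{ij}$ then reproduces \eqref{eq: rho-c} exactly, including the $2\psi^*_{jt}$ term.

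Even after this repair, the step you call bookkeeping does not go through. Once $\bm{p}$ is data, the energy cost is a constant of the reduced problem and no multiplier prices it. What you actually obtain is
\[
\sum_{j} v_i x_{ij}-\hat\rho_i^{c}(\bm{x}_i,\bm{x}^*_{-i})\;\le\;\sum_{j} v_i x^*_{ij}-\hat\rho_i^{c}(\bm{x}^*_i,\bm{x}^*_{-i}),
\]
with no $\rho_i^{\mathrm{LS,E}}$ term. If $\rho_i^{\mathrm{LS,E}}$ varies with $\bm{x}_i$, your claimed identity with the change in $\sum_j v_ix_{ij}-\hat\rho_i$ is false. If it is a lump sum, which is effectively how the paper's proof of Theorem~\ref{thm: individual rational} absorbs the energy cost, then $\bm{x}_i=\bm{0}$ does not carry zero payment and your claim-1 step fails. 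The paper's own claim-1 argument sets $p_j(t)=0$ on user $i$'s slots, which is unavailable once $\bm{p}$ is fixed.

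In fact claim~1 can fail. Take $I=J=T=1$, $v_1=e_1=G_1(1)=1$, $\pi(1)=1/2$. Then $x^*_{11}=p^*_1(1)=1$, and $\phi_{11}=\psi_{11}=\xi_1=1-x_{11}$ in the reduced problem. Keep the constant energy cost out of the reduced objective, so that the $(1,1)$ entry of $\Omega$ vanishes; your positivity argument also needs this. Now take $\lambda^2_{11}=1$ and all other multipliers zero. The only nonzero entries of $\Omega$ are the two entries pairing the constant coordinate with $\psi_{11}$, both equal to $1/2$, because the $\lambda^2_{11}/2$ in the $x_{11}$ position cancels the objective's $v_1/2$. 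This $\Omega$ has three properties:
\begin{enumerate}
\item it is nonnegative, hence copositive;
\item it satisfies complementary slackness, since $\psi^*_{11}=0$;
\item its dual value $1$ equals the reduced CPP value.
\end{enumerate}
Yet $\hat\rho_1=\tfrac12+1>v_1$.

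So your argument establishes individual rationality only with respect to the congestion charge $\hat\rho^c_i$. For CMP-R as defined, you need an additional hypothesis or a different treatment of the energy charge.
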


Theorem~\ref{thm: reducedCOP} follows since fixing $(\bm{p},\bm{\zeta})$ preserves feasibility of the reduced problem and the remaining dual multipliers remain nonnegative. The same arguments as in Theorem~\ref{thm: revenue-adequacy} therefore apply. Individual rationality also holds primarily because $(\bm{p},\bm{\zeta})$ are fixed and are not part of the users' assignment decisions. The detailed proof follows exactly the same arguments as in Theorem~\ref{thm: revenue-adequacy} and Theorem~\ref{thm: individual rational}.

Our second strategy for reducing computational burden is to make use of semidefinite approximations based on the inner and outer approximations of copositive cone \citet{parrilo2000structured}. For example, the copositive cone can be inner approximated by the sum of the positive semidefinite cone, $\mathcal{S}^+$, and the nonnegative cone, $\mathcal{N}$ \citep{maxfield1962matrix}. Thus, instead of enforcing the dual variable $\Omega$ to be copositive, we may require 
$$\Omega \in \mathcal{S}^+ + \mathcal{N} \subsetneq \mathcal{C}_n.$$

This technique can be applied to any COP problem. In particular, when the semidefinite inner approximation is applied to the reduced COP associated with \eqref{eq: MILP-powerFixed}, the resulting dual solution still induces a revenue-adequate pricing scheme. We refer to this mechanism as CMP-RS, short for CMP with problem reduction and semidefinite approximation.

\begin{corollary}[Revenue adequacy for CMP-RS]\label{cor: approx}
Let $(\bm{\lambda}^*, \bm{\Lambda}^*)$ be a solution obtained by solving the reduced COP under semidefinite inner approximation. Then we have $\bm{\lambda}^*\ge 0$, and $\bm{\Lambda}^* \ge 0$. Further, let $\hat{\rho}_i$ denote the payment under CMP-RS. Then CMP-RS is revenue-adequate, i.e.,
$$
\sum_{i \in \I} \hat{\rho}_i - \sum_{j \in \J,\, t \in \T} \pi(t)\,p^*_j(t) \;\ge\; 0.
$$
\end{corollary}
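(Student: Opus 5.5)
The plan is to reduce Corollary~\ref{cor: approx} to the argument already used for Lemma~\ref{lemma: positiveness} and Theorem~\ref{thm: revenue-adequacy}. The semidefinite inner approximation only restricts the cone constraint, since $\mathcal{S}^+ + \mathcal{N} \subseteq \mathcal{C}_n$. So any solution of the approximated reduced COP satisfies every linear constraint of the reduced COP feasible set, and its $\Omega$ is copositive. In other words, $(\bm{\lambda}^*,\bm{\Lambda}^*)$ is \emph{feasible} (though possibly suboptimal) for the reduced COP associated with \eqref{eq: MILP-powerFixed}.

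\textbf{Step 1: nonnegativity of the multipliers.} I will re-run the argument behind Lemma~\ref{lemma: positiveness} on the reduced feasible set. That argument should use only the linear constraints linking $\bm{\lambda},\bm{\Lambda},\bm{\delta}$ to entries of $\Omega$ together with copositivity of $\Omega$. The expected mechanism is as follows. The linear constraints identify certain diagonal entries of $\Omega$, or simple combinations of them, with the multipliers attached to slack variables. These are $\lambda^2_{jt},\Lambda^2_{jt}$, which enter through the slack $\psi_{jt}$, and $\lambda^4_i,\Lambda^4_i$, which enter through $\xi_i$. Testing copositivity with standard basis vectors $\bm{e}_k\ge 0$ gives $\Omega_{kk}\ge 0$. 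Testing with sums $\bm{e}_k+\bm{e}_l$ where needed gives $\Omega_{kk}+2\Omega_{kl}+\Omega_{ll}\ge0$.

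Fixing $(\bm{p},\bm{\zeta})$ deletes the rows and columns of $\Omega$ tied to $\bm{p},\bm{\zeta}$ and removes $\bm{\lambda}^3,\bm{\Lambda}^3$. It does not change the structure of the constraints attached to the $\psi$- and $\xi$-indexed blocks. Hence the same entrywise inequalities go through, and $\bm{\lambda}^*\ge0$ and $\bm{\Lambda}^*\ge0$. This is the step I expect to be the main obstacle. I must check against the explicit $\mathcal{F}^{\mathrm{COP}}$ in the appendix that deleting the $\bm{p}$- and $\bm{\zeta}$-blocks leaves the relevant constraints on the $\psi$/$\xi$ diagonal entries intact. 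I must also check that the energy-constraint multipliers $\lambda^1,\Lambda^1$, whose constant term now involves $p^*_j(t)$, are still forced nonnegative via the $\phi_{ij}$ diagonal. If needed, I will verify these sign conditions directly by writing the reduced dual constraints.

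\textbf{Step 2: revenue adequacy.} With $\bm{\lambda}^{2,*},\bm{\Lambda}^{2,*}\ge0$, every coefficient in $\hat\rho_i^c(\bm{x}_i^*,\bm{x}_{-i}^*)$ is nonnegative, because $s_l(t)$, $x^*_{lj}$ and $\psi^*_{jt}$ are all nonnegative. Hence $\hat\rho_i^c\ge0$ and $\hat\rho_i\ge\rho_i^{\mathrm{LS,E}}$. Summing over $i$ and applying Lemma~\ref{lemma: p*} exactly as in the proof of Theorem~\ref{thm: revenue-adequacy} collapses $\sum_i s_i(t)x^*_{ij}$ to $1$ wherever $p_j^*(t)>0$. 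This yields $\sum_i\hat\rho_i\ge\sum_{j,t}\pi(t)p_j^*(t)$.

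As remarked after Theorem~\ref{thm: revenue-adequacy}, no strong duality or optimality is needed; only feasibility is used, which is exactly why the inner approximation suffices.
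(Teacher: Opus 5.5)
Your proposal is correct and follows the paper's route. The inner approximation $\mathcal{S}^+ + \mathcal{N} \subseteq \mathcal{C}_n$ makes the computed multipliers dual-feasible, the argument of Lemma~\ref{lemma: positiveness} then gives $\bm{\lambda}^{2,*},\bm{\Lambda}^{2,*}\ge 0$, and the proof of Theorem~\ref{thm: revenue-adequacy} together with Lemma~\ref{lemma: p*} finishes. Your phrasing in terms of the \emph{reduced} COP is in fact more precise than the paper's appeal to feasibility for \eqref{eq: COP}. One small correction to your sketch of Step~1: $\lambda^2_{jt}$ sits off the diagonal, in $\Omega_{1,k}=\tfrac12\lambda^2_{jt}$ with $k$ the index of $\psi_{jt}$, while $\Omega_{11}=0$. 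Testing with $\mathbf{e}_1+\mathbf{e}_k$ therefore only yields $\lambda^2_{jt}+\Lambda^2_{jt}\ge 0$. You need the full $2\times 2$ copositivity of that principal submatrix, for example test vectors $\tau\mathbf{e}_1+\mathbf{e}_k$ with $\tau\to\infty$, which is exactly what Lemma~\ref{lemma: positiveness} uses.
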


Corollary~\ref{cor: approx} follows by verifying that $(\bm{\lambda}^*, \bm{\Lambda}^*)$ remain feasible for~\eqref{eq: COP}. Lemma~\ref{lemma: positiveness} then guarantees that $\bm{\lambda}^* \ge 0$ and $\bm{\Lambda}^* \ge 0$, which implies revenue adequacy.

%% file: sections/numerical_sim.tex
\section{Numerical Simulations} \label{sec: numerical}
We first study a small-scale numerical example and solve the corresponding COP exactly using the cutting-plane approach proposed in \cite{guoCopositiveDualityDiscrete2025}. The results confirm that the CMP mechanism satisfies all the theoretical properties established earlier and outperforms two benchmark pricing mechanisms. We then evaluate the performance of CMP-RS on a much larger-scale numerical example. The results show that CMP-RS achieves good empirical performance while remaining computationally tractable for large problem instances.

\subsection{CMP Mechanism}
First, we evaluate our CMP mechanism on a small EV charging problem. We consider a system with $I = 5$ EV users, $J = 3$ EVSE charging ports, and a planning horizon of $T = 6$ time slots. Each EV is characterized by its valuation $v_i$, energy requirement $e_i$, and arrival/departure time $[t_i^-, t_i^+]$. All EVSEs have identical power capacity, denoted by $C = 10$ units per time slot. The grid electricity cost follows a profile of $[0.1, 0.2, 0.3,0.4,0.5, 0.6]$ dollars per unit over the six time slots. The parameter configuration of each EV user $i$ for this test case is summarized in Table~\ref{tab:ev-params_big}.
\begin{table}[!htbp]
\centering
\caption{EV Parameter Configuration for Test Case}
\label{tab:ev-params_big}
\begin{tabular}{c|c|c|c}
\hline
User ($i$) & Value $v_i$ & $e_i$ & Service Window $[t_i^-, t_i^+]$ \\
\hline
1 & 2 & 3  & [1, 3] \\
2 & 2 & 6  & [1, 3] \\
3 & 5 & 8  & [1, 3] \\
4 & 4 & 16 & [1, 5] \\
5 & 2 & 7  & [1, 5] \\
\hline
\end{tabular}
\end{table}

We compare our CMP mechanism against two benchmark pricing rules:  
(i) TOU pricing \citep{valogianni2020sustainable}, and  (ii) convex relaxation-based pricing (CRP) \citep{cui2021optimal}.  

The TOU pricing rule charges each EV user solely for its electricity consumption, via the energy lump-sum term $\rho_i^{\mathrm{LS,E}}$ defined in~\eqref{eq: rho-LS-E}. This pricing rule ignores EVSE access scarcity and capacity effects, and therefore is 
not expected to induce individually rational user decisions.

In contrast, the CRP mechanism is constructed by solving a convex relaxation of problem~\eqref{eq: MILP}, i.e., replacing the binary constraints $x_{ij} \in \{0,1\}$ on the assignment variables with linear constraints $0\le x_{ij} \le 1$. The relaxed optimization yields dual variables associated with EVSE occupancy constraints \eqref{eq: MILP-occupancy} and charging-capacity constraints. These dual variables are then interpreted as surrogate ``congestion prices'' and used to form per-user access payments. Accordingly, the CRP mechanism is defined by replacing the COP dual variables $(\bm{\lambda}, \bm{\Lambda})$ in Definition~\ref{def: pricing} with the corresponding dual variables of the relaxed LP.

In the following, we compare our CMP mechanism with the two benchmark mechanisms described above. To highlight the differences in user-level payments under each mechanism, Fig.~\ref{fig: payment} presents a bar chart illustrating the payment incurred by each EV user under the CRP, TOU pricing, and CMP mechanism, respectively. 
\begin{figure}[h]
\centering
\small
\begin{tikzpicture}
\begin{axis}[
    width=0.8\linewidth,
    height=0.4\linewidth,
    ybar,
    bar width=15pt,
    enlarge x limits=0.2,
    ymin=-1,
    ymax=6,
    ylabel={Payments},
    xlabel={EV User $i=1,3,4$},
    symbolic x coords={1 ($v_1 = 2$), 3 ($v_3 = 5$), 4 ($v_4 = 4$)},
    xtick=data,
    legend style={font=\footnotesize, at={(0.5,1.15)}, anchor=south, legend columns=3},
    ticklabel style={font=\footnotesize},
    label style={font=\footnotesize},
    nodes near coords,
    nodes near coords align={vertical},
    every node near coord/.append style={font=\scriptsize, yshift=2pt},
]
\addplot coordinates {(1 ($v_1 = 2$),2.3) (3 ($v_3 = 5$),2.8) (4 ($v_4 = 4$),4.2)};
\addplot coordinates {(1 ($v_1 = 2$),0.3) (3 ($v_3 = 5$),0.8) (4 ($v_4 = 4$),2.6)};
\addplot coordinates {(1 ($v_1 = 2$),1.9) (3 ($v_3 = 5$),2.2) (4 ($v_4 = 4$),3.9)};

\legend{CRP, TOU pricing, CMP}
\end{axis}
\end{tikzpicture}
\caption{Comparison of Payments Under Different Pricing Mechanisms (Users 2 and 5 are not served in the optimal assignment and therefore are omitted from the chart.)}
\label{fig: payment}
\end{figure}
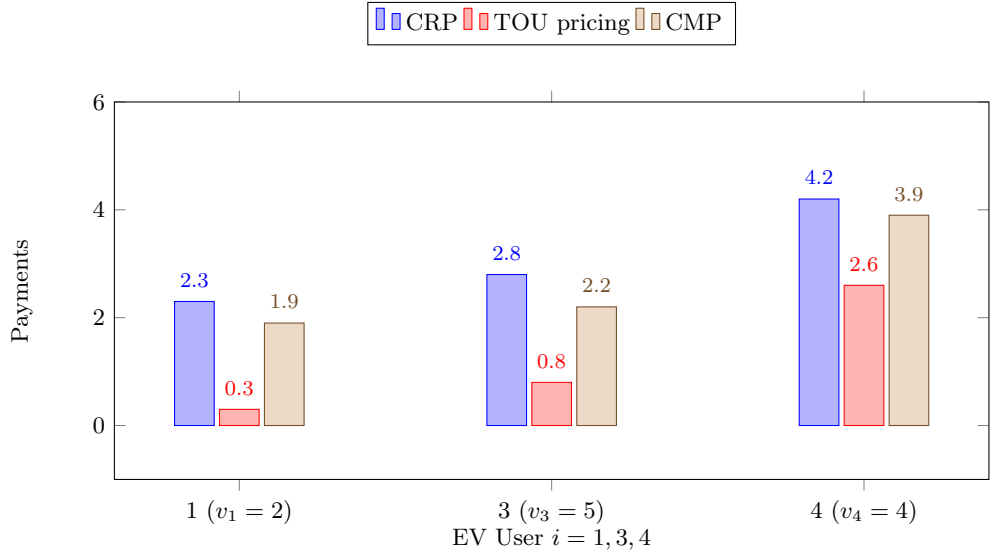

Fig.~\ref{fig: payment} highlights several differences between the three pricing mechanisms. TOU pricing charges users the least, as it accounts only for their electricity consumption and ignores all scarcity signals; it is therefore exactly budget-balanced, since total user payments match the operator's electricity procurement cost. In contrast, the CRP mechanism produces the highest payments, likely because the duals of convex relaxation overestimate the marginal value of binding discrete constraints. As a result, the implied prices may exceed the users’ own valuations~$v_i$, as seen for users~1 and~4 in our example. This will cause rational users to reject their charging offer. Our CMP mechanism, on the other hand, ensures non-negative welfare for every user, which guarantees that they will never reject an offer.

Table~\ref{tab: price-compare} summarizes the assignments of each pricing mechanism. The table compares the central optimal assignment $\bm{z}^*$ with the induced assignment decisions that each user would make by independently solving their own 
welfare-maximization problem under the CRP, TOU pricing, and CMP.
\begin{table}[h]
\centering
\caption{Central Assignment vs.\ User-Preferred Assignments Under Different Pricing Mechanisms}
\label{tab: price-compare}
\begin{tabular}{c|c|c|c|c}
\hline
\textbf{User $i$} 
& \textbf{Central Assignment} 
& \textbf{CRP} 
& \textbf{TOU pricing} 
& \textbf{CMP} \\
\hline
1 & $j=1$ & $j=1$ & $j=1$ & $j=1$ \\
2 & None & $j=2$ & $j=3$ & None \\
3 & $j=2$ & $j=2$ & $j=2$ & $j=2$ \\
4 & $j=3$ & $j=2$ & $j=3$ & $j=3$ \\
5 & None & $j=2$ & $j=3$ & None \\
\hline
\end{tabular}
\end{table}

In Table~\ref{tab: price-compare}, the first column lists the central assignment obtained from 
solving the welfare-maximizing MILP~\eqref{eq: std-MILP}. The notation $j=1$ (or $j=2,3$) indicates that user~$i$ is assigned to EVSE~$j$, i.e., 
$x_{ij}^* = 1$ and $x_{i\ell}^* = 0$ for all $\ell \neq j$; \emph{None} means that 
$x_{ij}^* = 0$ for all $j$, so the user is not served in the assignment. The remaining columns report the induced assignment that each user would choose by independently solving their own welfare-maximization problem. 

The convex relaxation-based pricing fails to reproduce the central assignment because it does not capture the discrete nature of EVSE access. The TOU pricing also fails to align incentives, as it does not use scarcity signals. In contrast, the CMP mechanism induces user decisions that coincide exactly with the central assignment.


\subsection{CMP-RS Mechanism}
Next, we test the performance of the CMP-RS mechanism on a larger-scale numerical example with $I = 25$ EVs, $J = 5$ EVSEs, and $T = 6$ time slots. The resulting dual matrix satisfies $\Omega \in \mathbb{S}^{365}$, indicating a lifting dimension of 365. 
Each EV user’s parameters $\{v_i, e_i, t_i^-, t_i^+\}$ are generated randomly. 
Table~\ref{tab: price-compare_2} reports the user-preferred decisions induced by the three pricing mechanisms under a setting similar to that in Table~\ref{tab: price-compare}.

\begin{table}[h]
\centering
\caption{Central Assignment vs.\ User-Preferred Assignments Under Different Pricing Mechanisms ($I = 25$, $J = 5$)}
\label{tab: price-compare_2}
\begin{tabular}{c|c|c|c|c}
\hline
\textbf{User $i$} 
& \textbf{Central Assignment} 
& \textbf{CRP} 
& \textbf{TOU pricing} 
& \textbf{CMP-RS} \\
\hline
1 & None & None & None & None \\
2 & $j=5$ & $j=1$ & $j=1$ & $j=5$ \\
3 & None & None & $j=1$ & None \\
4 & $j=2$ & $j=2$ & $j=2$ & $j=2$ \\
6 & $j=1$ & $j=1$ & $j=1$ & $j=1$ \\
7 & $j=4$ & $j=1$ & $j=1$ & $j=4$ \\
8 & $j=2$ & $j=2$ & $j=1$ & $j=2$ \\
$\cdots$ & $\cdots$ & $\cdots$ & $\cdots$ & $\cdots$ \\
\hline
\textbf{Matching Rate} 
& --- 
& $52\%$ 
& $40\%$ 
& $\mathbf{100\%}$ \\
\hline
\end{tabular}
\end{table}

As we can see from Table~\ref{tab: price-compare_2}, the CMP-RS mechanism continues to align individual decisions with the central assignment even at a substantially larger scale. Here, the matching rate is defined as the fraction of users whose individually optimal decisions coincide with the central assignment. Both CRP and TOU pricing frequently induce users to deviate from the centrally optimal allocation. Despite relying on approximation techniques, the proposed CMP-RS approach maintains desirable economic properties and demonstrates strong empirical performance, suggesting that the combination of inner-approximation and problem-specific reduction is a practical pathway for scaling the CMP mechanisms to larger instances.


%% file: sections/conclusion.tex
\section{Conclusion} \label{sec: conclusion}
We have used COP to construct CMP, a new pricing mechanism for EV charging station access. Traditional marginal pricing fails in this setting because integer-valued EVSE occupancy makes the allocation problem inherently discrete and non-convex. By reformulating the MILP as a CPP and analyzing its COP dual, we obtain economically meaningful shadow prices that remain valid despite the presence of binary decisions. CMP guarantees revenue adequacy and individual rationality. We demonstrate through numerical experiments that it captures congestion more accurately than convex relaxation-based or fixed-price benchmarks.

Future work includes extending CMP to real-time or stochastic settings (potentially through rolling horizon or primal-dual techniques), improving the scalability of CPP/COP and integrating distribution-network or renewable-generation constraints. 

\section{Acknowledgements}
This work was supported in part by NSF Grant \#2422849 and by the California Energy Commission under Prime Award No.~EPC-25-016 (Demonstrating Electric Vehicle Sub-Metering Solutions, DEVS).

%% file: sections/appendix.tex
\newpage
\onecolumn

\appendix

\section{Helping Lemmas}
\label{app: helping lemmas}
\noindent \textbf{Proof of Lemma \ref{lemma: positiveness}}
\begin{proof}
We prove Lemma \ref{lemma: positiveness} by considering the dual feasible set $\mathcal{F}^{\text{COP}}$ defined in \eqref{eq: F_COP}. Observe that \eqref{eq: F_COP} forces the dual variable $\Omega$ to be a copositive matrix. Thus for all non-negative vectors $\bm{\alpha}$, we have $\bm{\alpha}^\top \Omega \bm{\alpha} \ge 0$. This also holds for any principal submatrix of $\Omega$.

We first prove that $\lambda_{jt}^2, \Lambda_{jt}^2 \ge0$ for all $j,t$. We consider the following two-element index set $i' = [1, i_{\psi_{jt}}]$, where $i_{\psi_{jt}}$ corresponds to the index of $\psi_{jt}$, which is one of the decision variables in \eqref{eq: CPP}. Then the principal submatrix of $\Omega$ under $I'$ is as follows, based on \eqref{eq: F_COP}:
$$ \Omega_{I'} = 
\begin{bmatrix}
0 & \frac{1}{2}\lambda_{jt}^2 \\
\frac{1}{2}\lambda_{jt}^2 & \Lambda_{jt}^2
\end{bmatrix} \in \mathcal{C}_2,
$$
where $\mathcal{C}_2$ denotes the set of all 2-by-2 copositive matrices. It is well-known that under four dimension, any copositive matrix can be decomposed into a non-negative matrix plus a positive semidefinite matrix \citep{maxfield1962matrix}. Thus 
$$ 
\begin{bmatrix}
0 & \frac{1}{2}\lambda_{jt}^2 \\
\frac{1}{2}\lambda_{jt}^2 & \Lambda_{jt}^2
\end{bmatrix} = P + N,
$$
where $P$ denotes a positive semidefinite matrix and $N$ denotes a non-negative matrix. We know that for a positive semidefinite matrix of the form $\begin{bmatrix}
0 & p_{12} \\
p_{21} & p_{22}  
\end{bmatrix}$, 
we must have $p_{22} \ge 0$ and $p_{12} = p_{21}= 0$. Thus we obtain that
$$ 
\begin{bmatrix}
0 & \frac{1}{2}\lambda_{jt}^2 \\
\frac{1}{2}\lambda_{jt}^2 & \Lambda_{jt}^2
\end{bmatrix} = P + N \ge P \ge 
\begin{bmatrix}
0 & 0 \\
0 & 0
\end{bmatrix},
$$
which proves $\lambda_{jt}^2, \Lambda_{jt}^2 \ge 0$, for all $j \in \J,t \in \T$. The exact same argument can also be used to prove the positiveness of $\lambda_{jt}^3, \Lambda_{jt}^3$ and $\lambda_i^4, \Lambda_i^4$. 

\end{proof}

\noindent \textbf{Proof of Lemma~\ref{lemma: p*}}
\begin{proof}
We prove Lemma~\ref{lemma: p*} by examining the value of $\sum_{i \in \I} s_i(t)x_{ij}^*$. If $\sum_{i \in \I} s_i(t)x_{ij}^* = 1$, then $1 - \sum_{i \in \I} s_i(t)x_{ij}^* = 0$, and \eqref{eq: p* property} holds automatically. 

If $\sum_{i \in \I} s_i(t)x_{ij}^* \not = 1$, we know that $x^*_{ij} \in \{0,1\}$, and $\sum_{i \in \I} s_i(t)x_{ij}^* \le 1$. This implies that $\sum_{i \in \I} s_i(t)x_{ij}^* = 0$. In this case, one can verify that if $p_j^*(t)\not= 0$, then we can always replace $p_j^*(t)$ by 0, and this leads to a strictly larger objective value of \eqref{eq: MILP}. This contradicts the fact that $p_j^*(t)$ is optimal. Thus $p_j^*(t)$ can only be 0 in this case. 

Combining the above two cases, we have shown that \eqref{eq: p* property} holds.
\end{proof}

\noindent \textbf{Proof of Theorem \ref{thm: individual rational}}
\begin{proof}
Let $\bm{z}^*$ be an optimal solution of \eqref{eq: std-MILP}. 
Then $\left(\bm{z}^*, \bm{z}^*(\bm{z}^*)^\top \right)$ is an optimal solution of \eqref{eq: CPP}. 
Consider the Lagrangian relaxation obtained by dualizing the constraints corresponding to dual variables $(\lambda_{ij}^1, \Lambda_{ij}^1,\lambda_{jt}^2,\Lambda_{jt}^2,\lambda_{jt}^3,\Lambda_{jt}^3)$:
\begin{subequations}\label{eq: CPP relaxed}
\begin{alignat}{2}
\text{(Relaxed CPP)}\quad 
\max_{\bm{z},\,Z}\;\; 
& \underbrace{\sum_{i\in\I}\sum_{j\in\J} v_i x_{ij}
  - \sum_{t\in\T}\pi(t)\sum_{j\in\J} p_j(t)}_{\text{original objective}}
\label{eq: CPP relaxed-obj} \\
-\sum_{j\in\J}\sum_{t\in\T} \lambda_{jt}^{2,*}
&\Bigl(\sum_{i\in\I} s_i(t)\,x_{ij} + \psi_{jt}-1\Bigr)
-\sum_{j\in\J}\sum_{t\in\T} \Lambda_{jt}^{2,*}
\Bigl(\mathrm{Tr}\!\big(h_{jt}^2 (h_{jt}^2)^\top Z(\{x_{ij}\}_{i \in \I},\psi_{jt})\big) - 1\Bigr)
\notag\\
-\sum_{j\in\J}\sum_{t\in\T} \lambda_{jt}^{3,*}
&\Bigl(p_j(t) + \zeta_{jt} - G_j(t)\Bigr)
-\sum_{j\in\J}\sum_{t\in\T} \Lambda_{jt}^{3,*}
\Bigl(\mathrm{Tr}\!\big(h_{jt}^3 (h_{jt}^3)^\top Z(p_j(t),\zeta_{jt})\big) - G_j(t)^2\Bigr)
\notag\\
\text{s.t. } \quad & \sum_{t \in \T} s_i(t)\,p_j(t) - e_i x_{ij} = \phi_{ij}, \forall i,j\\
& 1 - \sum_{j \in \J} x_{ij} = \xi_i, \quad \forall i \\
& \mathrm{Tr}\!\left(h_{ij}^1 (h_{ij}^1)^\top
   Z(x_{ij},p_j(t),\phi_{ij})\right) = 0,\quad \forall i,j \\
&\mathrm{Tr}\!\left(h_i^4 (h_i^4)^\top
   Z(\{x_{ij}\}_{j \in \J},\xi_i)\right) = 1,\quad \forall i \\
& x_{ij} = Z_{x_{ij},x_{ij}}, \quad \forall i,j \label{eq: CPP relaxed-binary} \\[0.5em]
& \begin{bmatrix}
1 & \bm{z}^\top \\
\bm{z} & Z
\end{bmatrix} \in \mathcal{C}^* \label{eq: CPP relaxed-cone}
\end{alignat}
\end{subequations}

If the Lagrangian multipliers are optimal for \eqref{eq: COP}, then by strong duality, we know that $\left(\bm{z}^*, \bm{z}^*(\bm{z}^*)^\top \right)$ is also an optimal solution of problem \eqref{eq: CPP relaxed}.

Observe that Problem \eqref{eq: CPP relaxed} is equivalent to the following MBQP problem:
\begin{subequations}\label{eq: MBQP}
\begin{alignat}{2}
\text{(MBQP)}\quad 
\max_{\bm{z}}&\;\; 
\sum_{i\in\I}\sum_{j\in\J} v_i x_{ij}
  - \sum_{t\in\T}\pi(t)\sum_{j\in\J} p_j(t)
\label{eq: MBQP relaxed-obj} \\
&
-\sum_{j\in\J}\sum_{t\in\T} \lambda_{jt}^{2,*}
\left(\sum_{i\in\I} s_i(t)\,x_{ij} + \psi_{jt}-1\right)
-\sum_{j\in\J}\sum_{t\in\T} \Lambda_{jt}^{2,*}
\left( \left(\sum_{i\in\I} s_i(t)\,x_{ij} + \psi_{jt}\right)^2-1\right)
\notag\\
&
-\sum_{j\in\J}\sum_{t\in\T} \lambda_{jt}^{3,*}
\left(p_j(t) + \zeta_{jt} - G_j(t)\right)
-\sum_{j\in\J}\sum_{t\in\T} \Lambda_{jt}^{3,*}
\left( \left(p_j(t) + \zeta_{jt} \right)^2- G_j(t) \right)
\notag\\
\text{s.t. } \quad & \sum_{t \in \T} s_i(t)\,p_j(t) - e_i x_{ij} = \phi_{ij}, \forall i \in \I,j\in \J\\
&1 - \sum_{j \in \J} x_{ij} = \xi_i, \quad \forall i \\
& x_{ij} \in \{0,1\},
\quad \forall i \in \I, j \in \J \\
& \bm{z} \ge \bm{0}.
\end{alignat}
\end{subequations}

Notice that the decision variables of the sub MILP problem \eqref{eq: MILP_i} does not contain $\bm{p}, \bm{\psi}, \bm{\zeta}$, which are determined by the EVSE owner. Therefore, in the relaxed MBQP problem, we can fix $(\bm{p}, \bm{\psi}, \bm{\zeta}) = (\bm{p}^*, \bm{\psi}^*, \bm{\zeta}^*)$.

After some manipulation, we can rewrite (A.2a) as
$$\begin{aligned}
\max_{\hat{\bm{z}}} &\quad
\sum_{i\in\I}\sum_{j\in\J} v_i x_{ij}
  - \sum_{i\in \I} \rho_i^{\text{LS,E}} - \sum_{i\in \I} \rho_i^{\text{LS,I}}\\
&\quad
-\sum_{j\in\J}\sum_{t\in\T} \lambda_{jt}^{2,*}
\left(\sum_{i\in\I} s_i(t)\,x_{ij}\right)
-\sum_{j\in\J}\sum_{t\in\T} \Lambda_{jt}^{2,*}
\left(\sum_{i\in\I} s_i(t)\,x_{ij} + \psi^*_{jt}\right)^2,
\notag 
\end{aligned}
$$

By the above equivalence, we know that $\bm{z}^*$ is also an optimal solution of the following problem:
\begin{subequations}\label{eq: MBQP_2}
\begin{alignat}{2}
 \max_{\hat{\bm{z}}_i, \hat{\bm{z}}_{-i}} &\quad J(\bm{x}_i,\bm{x}_{-i} ) := 
\sum_{i\in\I}\sum_{j\in\J} v_i x_{ij}
  - \sum_{i\in \I} \rho_i^{\text{LS,E}} - \sum_{i\in \I} \rho_i^{\text{LS,I}}\\
&\quad
-\sum_{j\in\J}\sum_{t\in\T} \lambda_{jt}^{2,*}
\left(\sum_{i\in\I} s_i(t)\,x_{ij}\right)
-\sum_{j\in\J}\sum_{t\in\T} \Lambda_{jt}^{2,*}
\left(\sum_{i\in\I} s_i(t)\,x_{ij} + \psi^*_{jt}\right)^2, \notag\\
\text{s.t. } \quad & \sum_{t \in \T} s_i(t)\,p^*_j(t) - e_i x_{ij} = \phi_{ij}, \quad \forall i \in \I,j\in \J\\
&1 - \sum_{j \in \J} x_{ij} = \xi_i, \quad \forall i \in \I \\
& x_{ij} \in \{0,1\},
\quad \forall i \in \I, j \in \J \\
& \bm{z} \ge \bm{0},
\end{alignat}
\end{subequations}
where $\hat{\bm{z}}_i := [\bm{x}_i, \bm{\phi}_i, \xi_i]$ denotes all the decision variables that corresponds to user $i$, and $\hat{\bm{z}}_{-i}$ denote the collection of all other decision variables.

Notice that, since $\hat{\bm{z}}^*$ is an optimal solution, we can first maximize problem \eqref{eq: MBQP_2} over $\hat{\bm{z}}_{-i}$ and then maximize over $\hat{\bm{z}}_i$. Thus, by first maximizing over $\hat{\bm{z}}_{-i}$, we obtain that $\hat{\bm{z}}^*_i$ is an optimal solution of the following sub-problem:
\begin{subequations}\label{eq: sub_MBQP}
\begin{alignat}{2}
\quad \max_{\hat{\bm{z}}_i} &\quad J(\bm{x}_i,\bm{x}^*_{-i}) \\
&1 - \sum_{j \in \J} x_{ij} = \xi_i, \\
& x_{ij} \in \{0,1\},
\quad \forall  j \in \J \\
& \hat{\bm{z}}_i \ge \bm{0}.
\end{alignat}
\end{subequations}

Notice that, \eqref{eq: sub_MBQP} has exactly the same constraints as problem \eqref{eq: MILP_i} for all $i$. Further, we have that 
$$\begin{aligned} J(\bm{x}_i,\bm{x}^*_{-i}) &= 
\sum_{j\in\J} v_i x_{ij} - \rho_i^{\text{LS,E}} -\rho_i^{\text{LS,I}} - \sum_{j\in\J}\sum_{t\in\T} \lambda_{jt}^{2,*} s_i(t)\,x_{ij}\\
&\quad -\sum_{j,t}\Lambda_{jt}^{2,*}
\left(s_i(t)x_{ij}\left(s_i(t)x_{ij} + 2\sum_{l\not=i}s_l(t)x_{lj}^* + 2\psi_{jt}^* \right)\right) - \sum_{j,t}\Lambda_{jt}^{2,*} (\psi_{jt}^*)^2
\\
&= \sum_{j\in\J} v_i x_{ij}
 - \rho_i(\{x_{ij}\}) - \sum_{j,t}\Lambda_{jt}^{2,*} (\psi_{jt}^*)^2,
\end{aligned}
$$
which is exactly the objective of \eqref{eq: MILP_i} plus a constant term, where “constant” means independent of $\bm{x}_i$. Therefore, we conclude that \eqref{eq: MILP_i} and \eqref{eq: sub_MBQP} are equivalent, and thus $\hat{\bm{z}}^*$ is an optimal solution of \eqref{eq: MILP_i}. This establishes claim~2) of Theorem~\ref{thm: individual rational}, namely that no user $i$ has incentive to deviate to an alternative assignment.

Next, we prove claim~1) by showing that $\sum_{j\in\J} v_i x_{ij}^* - \rho_i(\{x_{ij}^*\}) \ge 0$ for every $i$. We proceed by contradiction. Suppose that there exists some user $i$ for which
$\sum_{j\in\J} v_i x_{ij}^* - \rho_i(\bm{x}^*_{i}, \bm{x}^*_{-i}) < 0$. Consider now the optimality condition of \eqref{eq: MBQP}. If we replace every $x_{ij}^*$ by $0$ and set $p_j^*(t) = 0$ for all $(j,t)$ satisfying $s_i(t)x_{ij}^* \neq 0$, $j\in\J$, $t\in\T$, then the objective value of \eqref{eq: MBQP relaxed-obj} would strictly increase. This contradicts the assumed optimality of $\bm{z}^*$ for \eqref{eq: MBQP}.

In conclusion, by establishing both claim~1) and claim~2), we complete the proof of Theorem~\ref{thm: individual rational}.

\end{proof}

\section{Formulation of $\mathcal{F}^{\text{COP}}$}
\label{app: F_COP}
To derive the dual feasible set $\mathcal{F}^{\text{COP}}$, we first rewrite the CPP \eqref{eq: CPP} in a more compact form. Let
\[
Y := \begin{bmatrix}
1 & \bm{z}^\top \\
\bm{z} & Z
\end{bmatrix}, \qquad 
\bm{z}^\top = \bigl[\bm{x}^\top,\; \bm{p}^\top,\; \bm{\phi}^\top,\; \bm{\psi}^\top,\; \bm{\zeta}^\top,\; \bm{\xi}^\top \bigr] \in \R^N,
\]
with $Z \in \R^{N\times N}$ and $N$ denoting the length of $\bm{z}$. The CPP can be rewritten as
\begin{subequations}
\label{eq: CPP-rewrite}
\begin{alignat}{4}
\text{(CPP$'$)} \quad \min_{\bm{z},\,Z} \quad & \mathrm{Tr}(\tilde{Q}Y) \\
\text{s.t.}\quad 
& \mathrm{Tr}(H_{ij}^1 Y) = 0, && \forall i\in\I, j\in\J &&& (\lambda_{ij}^1) \\
& \mathrm{Tr}(H_{jt}^2 Y) = 1, && \forall j\in\J, t\in\T &&& (\lambda_{jt}^2) \\
& \mathrm{Tr}(H_{jt}^3 Y) = G_j(t), && \forall j\in\J, t\in\T &&& (\lambda_{jt}^3) \\
& \mathrm{Tr}(H_i^4 Y) = 1, && \forall i\in\I &&& (\lambda_i^4) \\
& \mathrm{Tr}(\tilde{H}_{ij}^1 Y) = 0, && \forall i\in\I, j\in\J &&& (\Lambda_{ij}^1) \\
& \mathrm{Tr}(\tilde{H}_{jt}^2 Y) = 1, && \forall j\in\J, t\in\T &&& (\Lambda_{jt}^2) \\
& \mathrm{Tr}(\tilde{H}_{jt}^3 Y) = G_j(t)^2, && \forall j\in\J, t\in\T &&& (\Lambda_{jt}^3) \\
& \mathrm{Tr}(\tilde{H}_i^4 Y) = 1, && \forall i\in\I &&& (\Lambda_i^4) \\
& \mathrm{Tr}(B_{ij} Y) = 0, && \forall i\in\I, j\in\J &&& (\delta_{ij}) \\
& Y \in \mathcal{C}^* &&&&& (\Omega).
\end{alignat}
\end{subequations}

Here $\tilde{Q}$, $H$, $\tilde{H}$, and $B$ are coefficient matrices corresponding to squared versions of linear constraints.
Then the dual feasible set is given by
\begin{equation}
\label{eq: F_COP}
\begin{aligned}
\mathcal{F}^{\text{COP}} := \Bigl\{ (\bm{\lambda},\bm{\Lambda},\bm{\delta},\Omega) :\;& \mathrm{Tr} \left(\bm{z}^* (\bm{z}^*)^\top \Omega \right) = 0, \quad \tilde{Q}
- \sum_{i\in\I}\sum_{j\in\J}\bigl(\lambda_{ij}^1 H_{ij}^1 + \Lambda_{ij}^1 \tilde{H}_{ij}^1 \bigr) \\
& - \sum_{j\in\J}\sum_{t\in\T}\bigl(\lambda_{jt}^2 H_{jt}^2 + \Lambda_{jt}^2 \tilde{H}_{jt}^2\bigr) 
- \sum_{j\in\J}\sum_{t\in\T}\bigl(\lambda_{jt}^3 H_{jt}^3 + \Lambda_{jt}^3 \tilde{H}_{jt}^3\bigr) \\
& - \sum_{i\in\I}\bigl(\lambda_i^4 H_i^4 + \Lambda_i^4 \tilde{H}_i^4\bigr)
- \sum_{i\in\I}\sum_{j\in\J} \delta_{ij} B_{ij}
+ \Omega = 0,\;\; \Omega \in \mathcal{C}_{N+1} \Bigr\},
\end{aligned}
\end{equation}
where $\mathrm{Tr} \left(\bm{z}^* (\bm{z}^*)^\top \Omega \right) = 0$ is an additional complementary-slackness constraint that we impose to ensure the dual variable associated with $\bm{z}^*$ satisfies the KKT conditions.

Since
\[
\bm{x} \in \R^{IJ}, \quad \bm{p} \in \R^{JT}, \quad \bm{\phi} \in \R^{IJ}, \quad 
\bm{\psi} \in \R^{JT}, \quad \bm{\zeta} \in \R^{JT}, \quad \bm{\xi} \in \R^{I},
\]
we obtain
\[
N = 2IJ + 3JT + I.
\]

We next specify the matrices appearing in \eqref{eq: CPP-rewrite}. The objective matrix is
\[
\tilde{Q}=
\begin{bmatrix}
0 & \tfrac12 q^\top\\
\tfrac12 q & \bm{0}^{N\times N}
\end{bmatrix},
\]
where \(q\in\mathbb{R}^N\) is the coefficient vector of the linear objective in \eqref{eq: CPP}. More precisely, if
\[
\bm{z}^\top=
\bigl[
\bm{x}^\top,\;
\bm{p}^\top,\;
\bm{\phi}^\top,\;
\bm{\psi}^\top,\;
\bm{\zeta}^\top,\;
\bm{\xi}^\top
\bigr],
\]
then
\[
q^\top=
\bigl[
-\bm{v}^\top,\;
\bm{\pi}^\top,\;
\bm{0}^{1\times(IJ+2JT+I)}
\bigr],
\]
where \(\bm{v}\in\mathbb{R}^{IJ}\) and \(\bm{\pi}\in\mathbb{R}^{JT}\) collect the coefficients of \(\bm{x}\) and \(\bm{p}\), respectively. For each \(i\in\I\) and \(j\in\J\), the matrix \(B_{ij}\in\mathbb{S}^{N+1}\) is defined by
\[
(B_{ij})_{l_1,l_2}=
\begin{cases}
\tfrac12,
& l_1=1,\ l_2=1+\mathrm{pos}(x_{ij}),\\[0.3em]
\tfrac12,
& l_1=1+\mathrm{pos}(x_{ij}),\ l_2=1,\\[0.3em]
-1,
& l_1=l_2=1+\mathrm{pos}(x_{ij}),\\[0.3em]
0,
& \text{otherwise},
\end{cases}
\]
where \(\mathrm{pos}(x_{ij})\) denotes the position of \(x_{ij}\) in the vector \(\bm{z}\). Next, consider a linear constraint of the form
\[
\hat{h}^\top \bm{z}=b,
\qquad \hat{h}\in\mathbb{R}^N.
\]

Its associated matrices are defined as
\[
H(\hat{h})=
\begin{bmatrix}
0 & \tfrac12 \hat{h}^\top\\
\tfrac12 \hat{h} & \bm{0}^{N\times N}
\end{bmatrix},
\qquad
\tilde{H}(\hat{h})=
\begin{bmatrix}
0 & \bm{0}^\top\\
\bm{0} & \hat{h}\hat{h}^\top
\end{bmatrix}.
\]

Then
\[
\mathrm{Tr}\bigl(H(\hat{h})Y\bigr)=\hat{h}^\top \bm{z},
\qquad
\mathrm{Tr}\bigl(\tilde{H}(\hat{h})Y\bigr)=\hat{h}^\top Z\hat{h}.
\]

Accordingly, each matrix \(H_{ij}^1, H_{jt}^2, H_{jt}^3, H_i^4\) is obtained as \(H(\hat{h}_{ij}^1)\), \(H(\hat{h}_{jt}^2)\), \(H(\hat{h}_{jt}^3)\), and \(H(\hat{h}_i^4)\), respectively; similarly, each matrix \(\tilde{H}_{ij}^1, \tilde{H}_{jt}^2, \tilde{H}_{jt}^3, \tilde{H}_i^4\) is obtained as \(\tilde{H}(\hat{h}_{ij}^1)\), \(\tilde{H}(\hat{h}_{jt}^2)\), \(\tilde{H}(\hat{h}_{jt}^3)\), and \(\tilde{H}(\hat{h}_i^4)\), respectively.

For example, for the constraint
\[
\sum_{t\in\T} s_i(t)\,p_j(t)-e_i x_{ij}=\phi_{ij},
\]
the corresponding coefficient vector \(\hat{h}_{ij}^1\in\mathbb{R}^N\) is defined by
\[
(\hat{h}_{ij}^1)_{\mathrm{pos}(x_{ij})}=-e_i,\qquad
(\hat{h}_{ij}^1)_{\mathrm{pos}(p_j(t))}=s_i(t)\ \ \forall t\in\T,\qquad
(\hat{h}_{ij}^1)_{\mathrm{pos}(\phi_{ij})}=-1,
\]
and all remaining entries are zero. Equivalently,
\[
\hat{h}_{ij}^1
=
-e_i\,\mathbf{e}_{\mathrm{pos}(x_{ij})}
+\sum_{t\in\T}s_i(t)\,\mathbf{e}_{\mathrm{pos}(p_j(t))}
-\mathbf{e}_{\mathrm{pos}(\phi_{ij})},
\]
where \(\mathbf{e}_k\) denotes the \(k\)-th standard basis vector in \(\mathbb{R}^N\).